\theoremstyle{plain}
\newtheorem{theorem}{Theorem}[section]
\newtheorem{proposition}[theorem]{Proposition}
\newtheorem{step}{Step}
\newtheorem{lemma}[theorem]{Lemma}
\newtheorem{question}[theorem]{Question}
\newtheorem{corollary}[theorem]{Corollary}
\theoremstyle{definition}
\newtheorem{remark}[theorem]{Remark}
\numberwithin{equation}{section}
\newcommand{\Z}{\mathbb{Z}}   
\newcommand{\N}{\mathbb{N}}   
\newcommand{\abs}[1]{\left \lvert#1 \right \rvert}
\newcommand{\con}{l} 
\newcommand{\growth}{\mathsf{Growth}}
\newcommand{\nclose}[1]{\ensuremath{\langle\!\langle#1\rangle\!\rangle}}
\DeclareMathOperator{\dist}{\mathsf{dist}}
\DeclareMathOperator{\diam}{\mathsf{diam}}
\title[Coarse geometry of fire retaining and group splittings]{Coarse geometry of the fire retaining property and group splittings}
\author[E. Mart\'inez-Pedroza]{Eduardo Mart\'inez-Pedroza}
\address{Department of Mathematics and Statistics. 
Memorial University of Newfoundland. St. John's, NL. Canada}
\email{eduardo.martinez@mun.ca}
\author[T. Prytu{\l}a]{Tomasz Prytu{\l}a}
\address{
Alexandra Instituttet
Rued Langgaards Vej 7, 5D, 2300 København S, Denmark}
\email{prytula@mpim-bonn.mpg.de}
\subjclass[2000]{Primary 05C57, 20F65; Secondary 05C10, 20F69}
\keywords{games on graphs, quasi-isometry, splitting, coarse geometry, growth}
\date{\today}
\begin{document}

	\begin{abstract}
		Given a non-decreasing function $f\colon \N\to \N$ we define a single player game on (infinite)  connected graphs that we call \emph{fire retaining}.  If a graph $G$  admits a winning strategy for any initial configuration (initial fire) then we say that $G$ has the \emph{$f$-retaining property}; in this case if $f$ is a polynomial of degree $d$, we say that $G$ has the polynomial retaining property of degree $d$. 

		We prove that having the polynomial retaining property of degree $d$ is a quasi-isometry invariant in the class of uniformly locally finite connected graphs. Henceforth, the retaining property defines a quasi-isometric invariant of finitely generated groups. We prove that if a finitely generated group $G$ splits over a quasi-isometrically embedded subgroup of polynomial growth of degree $d$, then $G$ has polynomial retaining property of degree $d-1$. Some connections to other work on quasi-isometry invariants of finitely generated groups are discussed and some questions are raised.
	\end{abstract}

	\maketitle

	\section{Introduction}

	The firefighter problem on graphs was introduced by Hartnell in 1995 and it has been studied by graph theorists ever since, see for example~\cite{FM09} and references therein. Recently, in~\cite{DMT17}, this problem was studied in the context of coarse geometry, leading to definition of quasi-isometry invariants of finitely generated groups that were called \emph{fire containment} properties. The current article follows the same vein: we study a variation that we call \emph{fire retaining} properties, we define new quasi-isometry invariants of finitely generated groups, and we exhibit a relation with the existence of group splittings.

	Let $\{f_n\}_{n\in\N}$ be a sequence of non-negative integers. Suppose that a fire breaks out at a finite set of vertices $X_0$ of a connected graph $G$. At each subsequent time unit $n \in \N$ (called a turn), the player (called the firefighter) chooses a set $W_n$ of at most $f_n$  distinct vertices to become protected; then the fire spreads to all  vertices which are adjacent to vertices which are on fire and are not yet protected. Once a vertex is on fire or is protected, it stays in such state for all subsequent turns.  Denote by $U$ the set of vertices which, after the game has been played, never caught fire. 

	\begin{itemize}
		\item If $U$ contains all but finitely many vertices of $G$, we say that the sequence $\{W_n\}$ is a \emph{containment $\{f_n\}$-strategy} for the initial fire $X_0$.

		\item If the growth rate of $U$ with respect to the edge-path distance in $G$ is equivalent to the growth rate of $G$,  we say that the sequence $\{W_n\}$ is a \emph{retaining $\{f_n\}$-strategy} for the initial fire $X_0$.
	\end{itemize}

	If every finite subset of vertices $X_0$ of $G$ admits a containment $\{f_n\}$-strategy we say the graph $G$ satisfies the \emph{$\{f_n\}$-containment property}. The graph $G$ satisfies \emph{polynomial containment of degree $d$} if there is a constant $K>0$ such that $G$ has the $\{f_n\}$-containment property for $f_n=Kn^d$. The \emph{$\{f_n\}$-retaining property} and \emph{polynomial retaining property of degree $d$} for a connected graph are defined analogously. 

	In Figure~\ref{fig:map} we present a relation between various types of retaining and containment properties, along with some examples of graphs (Cayley graphs of finitely generated groups) satisfying these properties.

	Observe that if a connected graph satisfies the $\{f_n\}$-containment property then it satisfies the $\{f_n\}$-retaining property. In~\cite{DMT17}, it was proved that satisfying the polynomial containment property of degree $d$ is a quasi-isometry invariant in the class of uniformly locally finite graphs (uniformly locally finite means there is a constant that bounds from above the degree of any vertex in the graph). The first theorem in this article says that the analogous result holds for the polynomial retaining property.

	\begin{theorem}(Corollary~\ref{thm:qi})\label{introthm:1}
		Let $G$ and $H$ be uniformly locally finite connected graphs. 
		Suppose that $G$ is quasi-isometric to $H$. If $G$ has polynomial retaining property of degree $d$  then $H$ has polynomial retaining property of degree $d$.  
	\end{theorem}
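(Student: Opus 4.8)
My plan is to recast the retaining property as a statement about \emph{coarse separation}, which is manifestly a quasi-isometry invariant, and then to track the two quantitative features of a retaining strategy — its budget, and the growth of the region it saves — through the recasting.

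\emph{Reformulation.} Fix an initial fire $X_0$ in $G$ and a retaining $f$-strategy $\{W_n\}$, and let $P=\bigcup_n W_n$ be the set of vertices that ever get protected; we may assume the firefighter never wastes a turn on an already-burnt vertex, so $P$ is contained in the never-burnt set $U$. The elementary starting point is that a vertex $u$ lies in $U$ if and only if every edge-path in $G$ from $X_0$ to $u$ meets $P$: the forward implication is immediate, and the converse follows by walking the fire along a $P$-avoiding path. Equivalently, the $X_0$-component of $G\setminus P$ is precisely the burnt set, so $P$ coarsely separates $X_0$ from $U\setminus P$. Hence a retaining $f$-strategy for $X_0$ is exactly a set $P\subseteq V(G)$, disjoint from $X_0$, with a partition $P=\bigsqcup_n W_n$ satisfying $|W_n|\le f(n)$ in which each $W_n$ is protected before the fire would reach it, such that $P$ together with the non-$X_0$ components of $G\setminus P$ has growth equivalent to $\growth_G$ (distances in $G$). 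The theorem asserts that this property is a quasi-isometry invariant.

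\emph{Setup and the transferred separator.} Fix a quasi-isometry $\phi\colon H\to G$ with a $(\lambda,C)$-quasi-inverse $\bar\phi\colon G\to H$, and record the quantitative consequences of uniform local finiteness: a common degree bound $D$; balls of radius $r$ have at most $D^r$ vertices; $\phi$ and $\bar\phi$ have point-preimages of size at most $D^{\lambda C}$; $\phi$ carries adjacent vertices to vertices within $\lambda+C$, hence an edge-path to a $(\lambda+C)$-coarse path that can be filled in to an honest edge-path inside the $(\lambda+C)$-neighbourhood of its image; and $\phi(V(H))$ is $C$-coarsely dense. Since $\{Kn^d:K>0\}$ is closed under $n\mapsto an+b$ and under rescaling, it suffices to produce, for each finite $X_0\subseteq V(H)$, one retaining strategy on $H$ with budget a fixed polynomial of degree $d$. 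Given $X_0$, set $Y_0:=\phi(X_0)$, apply the hypothesis on $G$ to get such a separating set $P\subseteq V(G)$ (with corresponding never-burnt set $U_G$), and put $Q:=N^{(t)}_H(\bar\phi(P))$ for a constant $t$ depending only on $\lambda,C$. The clean half of the proof is then that $Q$ coarsely separates $X_0$ from $\phi^{-1}(U_G)$ in $H$: an edge-path in $H$ from $X_0$ to a vertex $w\in\phi^{-1}(U_G)$ pushes forward to a coarse path from $Y_0$ to $\phi(w)$ whose filling is an honest $G$-path from the burnt vertex $Y_0$ to $\phi(w)\in U_G$, and therefore meets $P$; a vertex of the filled path lying in $P$ is within $\lambda+C$ of the image of the original $H$-path, so, pulling back by $\bar\phi$ and using $\bar\phi\circ\phi\simeq\mathrm{id}_H$, the original path meets $Q$ once $t$ is large enough. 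By the reformulation this yields $\phi^{-1}(U_G)\subseteq U_H$, provided the $H$-firefighter can protect all of $Q$ on time within a polynomial budget of degree $d$.

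\emph{The two remaining points, and the main obstacle.} First one must produce a feasible schedule for protecting $Q$. The naive ``rescale time by $\lambda+C$'' schedule can fail, because some $W_m$ may have been protected in $G$ long before the fire would have reached it; so the schedule should be read off adaptively from the $G$-play — protecting the thickened $\bar\phi$-pullback of $W_m$ at the $H$-turn obtained by dividing, by a fixed integer $r_0>\lambda+C$, the first $G$-turn at which the $G$-fire touches the $1$-neighbourhood of $W_m$ — and the delicate verification is that this beats the $H$-fire to every vertex (via $\bar\phi$ coarsely Lipschitz) while keeping the number of vertices protected at $H$-turn $n$ polynomial of degree $d$ (via $|W_m|\le Km^d$, the bounded point-preimages, and the factor $D^t$); here it is useful to first replace the $G$-barrier by a minimal cut between $X_0$ and $U_G\setminus P$, so that no part of it is protected wastefully far in advance. \textbf{The main obstacle is the final step: transferring the growth of the saved region}, i.e.\ upgrading $\phi^{-1}(U_G)\subseteq U_H$ to $\growth_{U_H}\sim\growth_H$. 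The bound $\growth_{U_H}\preceq\growth_H$ is free, and $\growth_G\sim\growth_H$ by quasi-isometry invariance of the growth of a uniformly locally finite connected graph, so everything comes down to $\growth_{\phi^{-1}(U_G)}\succeq\growth_{U_G}$ — which is genuinely subtle, since $\phi$ need not be surjective (points of $U_G$ outside $\phi(V(H))$ drop out of $\phi^{-1}(U_G)$) and $U_G$, though growth-equivalent to $G$, need not be ``coarsely thick''. My plan is to first arrange that $U_G$ is $\rho$-robust — equal to the $\rho$-neighbourhood of its own $\rho$-interior — for $\rho$ as large as needed, using that the frontier of $U_G$ is contained in $P$ and hence is slow (growth at most a polynomial of degree $d+1$) and that $U_G$ minus its $\rho$-interior lies in the $\rho$-neighbourhood of that frontier; this gives a growth-large robust sub-region whenever $\growth_G$ outgrows degree $d+1$, and the remaining ``slow'' regime needs a separate, more hands-on argument. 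Then $\rho$-robustness together with the $C$-coarse density of $\phi(V(H))$ and the bounded point-preimages of $\phi$ gives $\growth_{\phi^{-1}(U_G)}\succeq\growth_{U_G}$, whence $\growth_H\sim\growth_G\sim\growth_{U_G}\preceq\growth_{U_H}\preceq\growth_H$ and the constructed $H$-strategy is retaining, proving the theorem. A possibly cleaner alternative is to avoid $\phi^{-1}$ and instead prove $\bar\phi(U_G)\subseteq N^{(t')}_H(U_H)$ directly from the separation picture and invoke the bounded fibres of $\bar\phi$; either way the crux is the interaction between non-surjectivity of the quasi-isometry and the potential thinness of the saved region.
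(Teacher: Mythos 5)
Your overall architecture — push a budgeted separating barrier from $G$ to $H$ along a quasi-isometry and quasi-inverse, then transfer the growth of the saved region — is the same as the paper's. The paper implements the transfer with a ready-made device (Lemma~\ref{prop:DMT15}, from~\cite{DMT17}): for a fattened initial $G$-fire $X_0=B_G(\psi h_0,2c(q+2))$ of reach $2c$ and a retaining strategy $\{W_k\}$, it builds an $H$-strategy $Q_k=\bigcup_{g\in W_k}B_H(\phi g,\,c^2+2c)\setminus Y_{k-1}$ for the initial fire $Y_0=B_H(h_0,q)$ satisfying $|Q_k|\leq\delta^{c^2+2c}|W_k|$ (constant budget inflation) and $h\in Y_k\Rightarrow\psi h\in X_{k-1}$ (so $\psi^{-1}(U)\subseteq V$ for the respective saved sets). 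Your worry about a ``naive rescaled schedule'' failing is handled there automatically: playing at reach $2c$ in $G$ against reach $1$ in $H$ is exactly the synchronisation, and Lemma~\ref{lem:FireReach} converts reaches without changing the polynomial degree, so there is no need for the minimal-cut normalisation or the adaptive timing rule you propose — that machinery is extra, though not wrong.

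Where the proposal does not close is the step you yourself flag as the main obstacle: upgrading $\psi^{-1}(U)\subseteq V$ to $\growth(V)=\growth(H)$. You need $\growth(\psi^{-1}(U))\succeq\growth(U)$, and, as you observe, $\psi(H)\cap U$ need not be coarsely dense in $U$, so restricting $\psi$ to $\psi^{-1}(U)$ gives only a quasi-isometric embedding into $U$, i.e.\ the wrong-direction inequality. Your proposed remedy — excise a $\rho$-neighbourhood of the frontier of $U$ to get a $\rho$-robust core, bounding the frontier by degree $d{+}1$ — has two gaps: the degree-$(d{+}1)$ bound on $|W\cap B_G(X_0,n)|$ requires first normalising the strategy so that vertices are protected just ahead of the fire (not assumed, and nontrivial to arrange while staying within the budget), and even granting it, the argument is vacuous when $\growth(G)$ does not strictly dominate degree $d{+}1$, which is precisely the regime of interest; you defer this to a ``separate, more hands-on argument'' that you do not give. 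So as written the proposal is incomplete. It is worth saying that you have located the most delicate point of the whole theorem: the paper dispatches this step with the single assertion that the restriction $\psi\colon\psi^{-1}(U)\to U$ is a quasi-isometry, which supplies the quasi-isometric embedding half for free but leaves the coarse-surjectivity half — exactly the inequality you are wrestling with — without justification.
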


	A more general version of Theorem~\ref{introthm:1} is the main content of Section~\ref{sec:qi}.
	Since any two Cayley graphs of a finitely generated group $G$ with respect to finite generating sets are quasi-isometric, satisfying polynomial retaining or containment of degree $d$ is a well-defined invariant of  finitely generated groups. 
	
	We search for algebraic interpretations of these properties in the class of finitely generated groups. For example, there are known relations between containment and growth for finitely generated groups. In~\cite{DMT17}, it is proved that if a group has polynomial growth of degree $d$ then it satisfies polynomial containment of degree $d-2$. Recently, 	    joint work of Amir, Baldasso and Kozma~\cite{ABK20} shows  that any Cayley graph with  growth bounded from below by a polynomial of degree $d$ does not satisfy the $\{f_n\}$-containment property if $\{f_n\}=o(n^{d-2})$.
	It is know that finitely generated groups of intermediate growth do not satisfy polynomial containment~\cite{ABK20}. See also~\cite{Le17, emp16, ABK20} for other related results.
	
	The second result of this article exhibits a relation between retaining properties and splittings of groups.  We say that a group \emph{$G$ splits over a subgroup $C$} if either $G=A\ast_C B$ and $C$ is a proper subgroup of $A$ and $B$, or $G$ is an HNN-extension $A\ast_C$ (with no assumptions on $C$). 

	\begin{theorem}(Theorem~\ref{thm:splitpoly})\label{introthm:2}
		Let $G$ be a finitely generated group that splits over a finitely generated subgroup $C$. If $C$ is quasi-isometrically embedded in $G$ and $C$ has polynomial growth of degree $d$, then $G$ has polynomial retaining property of degree $d-1$.
	\end{theorem}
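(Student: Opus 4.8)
The plan is to use the Bass--Serre tree $T$ of the splitting to turn each coset of $C$ into a ``wall'' that coarsely separates the Cayley graph $\Gamma$ of $G$ with respect to a finite generating set, and then, for a given finite initial fire, to defend one such wall with budget $f(n)=Kn^{d-1}$ per turn while the fire spreads. I would first record the elementary fact that, since each generator of $G$ moves the vertex of $T$ one occupies by a uniformly bounded amount, there is a constant $r_{0}$ depending only on the generating set and the splitting such that for every coset $gC$ the set $W=N_{r_{0}}(gC)$ disconnects $\Gamma$ into the two ``sides'' corresponding to the two components of $T$ minus the edge $gC$: any edge of $\Gamma$ joining the two sides has an endpoint within $r_{0}$ of $gC$. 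Because left translation gives isometries of $\Gamma$ carrying $C$ onto any coset $gC$, and $gC$ with the induced metric is isometric to $C$ with the induced metric, the hypothesis that $C$ is quasi-isometrically embedded and has polynomial growth of degree $d$ yields a uniform bound $\abs{N_{r_{0}}(gC)\cap B_{\Gamma}(p,R)}\le C_{0}R^{d}$ for $p\in gC$.

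Given a finite fire $X_{0}\subseteq B_{\Gamma}(e,R_{0})$, I would pick $W=N_{r_{0}}(g_{1}C)$ with the edge $g_{1}C$ far out in $T$ --- past the finitely many chambers meeting $B_{\Gamma}(e,R_{0})$, and far enough that $\dist(X_{0},W)\ge 3R_{0}$; this is possible since every vertex of $T$ has valence at least $2$, so $T$ contains rays in all directions. Let $D$ be the component of $\Gamma\setminus W$ on the far side. Writing $N(r)=\abs{W\cap B_{\Gamma}(X_{0},r+1)}$, the placement of $W$ forces $N(r)=0$ for $r$ small, while the uniform ball bound gives $N(r)\le C_{1}r^{d}$ for all $r\ge 1$ with $C_{1}$ independent of $X_{0}$. \textbf{The step I expect to be the main obstacle} is exactly this budget bookkeeping with a constant not depending on $X_{0}$: one must \emph{not} try to bound the number of wall vertices the fire meets in a single turn --- this need not be $O(n^{d-1})$ if $C$ has irregular sphere sizes, and it would reintroduce dependence on $R_{0}$ --- but instead work with cumulative quantities. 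Since $\sum_{n=1}^{r}Kn^{d-1}\ge Kr^{d}/d$, choosing $K>dC_{1}$ gives $N(r)\le\sum_{n=1}^{r}f(n)$ for every $r\ge 1$.

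With this inequality I would defend $W$ by an earliest-deadline-first schedule: a vertex $w\in W$ cannot burn before turn $\dist(X_{0},w)$, so assign it deadline $\dist(X_{0},w)-1$, and at turn $n$ protect the at most $f(n)$ unprotected vertices of $W$ of smallest deadline. If some wall vertex were ever to burn, the earliest-deadline one to do so would force $N(r)>\sum_{n\le r}f(n)$ for $r$ its deadline, contrary to the choice of $K$; hence no wall vertex burns, the fire never crosses $W$, and $D$ is contained in the set $U$ of vertices that never burn. It remains to check $\growth(D)\sim\growth(G)$. For this I would take a hyperbolic element $\gamma$ for the action of $G$ on $T$ whose axis passes through $g_{1}C$ into $D$, and replace it by a power so that the neighbourhoods $N_{r_{0}}(\gamma^{k}g_{1}C)$ are pairwise disjoint (possible since moving far in $T$ forces moving far in $\Gamma$); then a path from $\gamma^{n}x$ to $W$, for fixed $x\in D$, must cross $n$ of these neighbourhoods, so $\dist(\gamma^{n}x,W)\ge n$, while $\dist(x,\gamma^{n}x)\le n\lambda$ with $\lambda=\dist(x,\gamma x)$. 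Thus $B_{\Gamma}(\gamma^{n}x,n-1)\subseteq D$, and by vertex-transitivity of $\Gamma$ it has as many vertices as $B_{\Gamma}(e,n-1)$ while lying inside $B_{\Gamma}(x,(\lambda+1)n)$; this gives $\growth(D)\gtrsim\growth(G)$, the reverse being trivial. Since $K$ is chosen independently of $X_{0}$, these strategies witness that $G$ has the polynomial retaining property of degree $d-1$. (When $d=0$ the subgroup $C$ is finite, $W$ is uniformly bounded and can be protected in the first turn, so the statement holds with the usual reading of this degenerate degree.)
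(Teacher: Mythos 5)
Your proof is correct and follows the same overall architecture as the paper's (protect a coarsely separating neighbourhood of a coset of $C$, use cumulative budgets to show degree $d-1$ suffices, and then certify that the deep component on the far side has full growth), but two of the three ingredients are executed differently. For the separation step, the paper simply cites Papasoglu's lemma (Lemma~\ref{lem:Papa}) that a finitely generated group splitting over a finitely generated subgroup is coarsely separated by that subgroup, whereas you rederive this directly from the Bass--Serre tree; these are equivalent. The more substantive divergence is in the growth estimate for the far component $D$. The paper's Lemma~\ref{lem:Mama} argues intrinsically: since $C$ acts cocompactly on the wall $L$, any vertex of the deep component at distance $n+1$ from $L$ can be $C$-translated so that the nearest point of $L$ lands in a fixed bounded set, and then the $n$-ball around the translated vertex lies in $U$ and in a ball of controlled radius about the basepoint. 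You instead pick a hyperbolic element $\gamma$ of the tree action whose axis runs through the wall edge, and use the coarse-Lipschitz property of the orbit map $G\to T$ (not the quasi-isometric embedding of $C$, which is only needed for the polynomial count on the wall) to separate the translated walls $N_{r_0}(\gamma^k g_1 C)$; this produces explicit far vertices $\gamma^n x$ at controlled distance from the basepoint, and vertex-transitivity finishes it. Both routes are sound. Your version is more concrete but leans on minimality of the Bass--Serre tree to guarantee a hyperbolic axis through the chosen edge, and on the one-Lipschitz projection to the tree for the disjointness of the shifted walls --- worth spelling out if you write this up, whereas the paper's Lemma~\ref{lem:Mama} sidesteps both by working only with the $C$-action on $L$, and in fact gives a statement that applies to any coarsely separating $C$-cocompact set without reference to the tree. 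Two small bookkeeping notes: your deadline should be $\dist(X_0,w)$ rather than $\dist(X_0,w)-1$ (protection at turn $n$ happens before the spread at turn $n$), which does not affect the argument; and the case $d=0$ you treat at the end corresponds to the paper's Proposition~\ref{cor:split-finite}, which the paper separates out precisely because the main theorem is stated only for $d>0$.
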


	\begin{proof}[Outline of the proof of Theorem~\ref{introthm:2}.] 
		The splitting ensures that the Cayley graph of $G$ can be disconnected into two unbounded components by removing an appropriate neighborhood $L$ of $C$. Using $L$ we can build a ``wall'' in $G$, namely by protecting all vertices of $L$.

		To do this, one has to ensure that in the process of protecting $L$, one is always ahead of the spreading fire. 
		Since $C$ is quasi-isometrically embedded, its growth inside $G$ is polynomial of degree $d$. Since $L$ and $C$ are quasi-isometric, the same holds for $L$. Therefore at time $n$, at most (roughly) $n^d$ vertices of $L$ could potentially catch fire. One verifies that one can protect $n^d$ vertices by protecting at time $k$ for $1\leq k \leq n$ an amount of vertices that grows polynomially of degree $d-1$ with $k$. 
		Because $L$ disconnects the Cayley graph of $G$, we get that an  entire unbounded component will never catch fire. Then the homogeneity of the Cayley graph implies that this component has the growth rate as large as the group.  
	\end{proof}

	A finitely generated group $G$ has the \emph{constant retaining property} if it has polynomial retaining property of degree zero. The following is  a particular instance of Theorem~\ref{introthm:2}.

	\begin{corollary}
		Suppose $G = A \ast_C B$ where $C$ is a proper subgroup of $A$ and $B$. If $C$ is virtually cyclic and quasi-isometrically embedded in $G$, then $G$ has the constant retaining property.
	\end{corollary}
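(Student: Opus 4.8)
The plan is to read the corollary directly off Theorem~\ref{introthm:2}, whose hypotheses ask that $C$ be finitely generated, quasi-isometrically embedded in $G$, and of polynomial growth of some degree $d$, and whose conclusion is polynomial retaining of degree $d-1$. The only real task is thus to identify the correct $d$ for a virtually cyclic $C$ and to verify the side hypotheses. Here I would use two elementary facts: a virtually cyclic group is always finitely generated (it has a finite-index cyclic subgroup), so the finite-generation hypothesis is automatic; and an infinite virtually cyclic group is quasi-isometric to $\Z$, hence has polynomial growth of degree $d=1$ (while a finite group has bounded growth, which is in particular $O(n)$, so $d=1$ is admissible there as well). Quasi-isometric embeddedness is assumed outright.

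With these in hand, applying Theorem~\ref{introthm:2} with $d=1$ gives that $G$ has polynomial retaining property of degree $d-1=0$, which is by definition the constant retaining property, completing the argument.

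If one does not wish to count a finite group as having ``polynomial growth of degree $1$'', the case of finite $C$ can be settled directly by specialising the proof scheme of Theorem~\ref{introthm:2}: the splitting yields a neighbourhood $L$ of a coset of $C$ whose removal disconnects a Cayley graph $\Gamma$ of $G$ into unbounded components, and $L$ is finite when both $C$ and the generating set are; one then takes $f$ equal to the constant $\abs{L}$, protects all of $L$ on the first turn (after translating $L$ off the given finite initial fire), and observes that the resulting wall is never crossed, so an unbounded component of $\Gamma\setminus L$ survives and, by vertex-transitivity, has the growth rate of $G$. I expect no genuine obstacle: all the substance is in Theorem~\ref{introthm:2}, and the corollary reduces to plugging in $d=1$, with only the degenerate finite case needing the small separate remark above.
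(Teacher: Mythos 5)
Your proposal is correct and follows essentially the same route the paper intends. The paper states the corollary as ``a particular instance of Theorem~\ref{introthm:2},'' and your derivation --- a virtually cyclic group is finitely generated, and (in the relevant two-ended case) quasi-isometric to $\Z$, hence of polynomial growth of degree $d=1$, so Theorem~\ref{introthm:2} yields polynomial retaining of degree $d-1=0$, which is the constant retaining property by definition --- is exactly the intended substitution.

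One small remark on your handling of the degenerate finite case: saying that $d=1$ ``is admissible'' for a finite $C$ is at odds with the literal hypothesis of Theorem~\ref{thm:splitpoly}, which requires polynomial growth of degree $d>0$ and is most naturally read as exact degree (a finite group has degree $0$, and then $d-1$ is meaningless). Your fallback argument, however, is sound and in fact re-derives Proposition~\ref{cor:split-finite} of the paper, which treats splittings over finite subgroups directly and gives the even stronger finite-step retaining property. In the paper's usage (cf.\ the reference to Papasoglu's work on splittings over two-ended groups), ``virtually cyclic'' is best read as two-ended, so the finite case does not actually arise; but your care in addressing it does no harm.
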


	The quasi-isometry invariance of splittings of finitely presented groups over two-ended (i.e.,\ virtually cyclic) groups was settled by deep results of Papasoglu~\cite{Pa05}. In particular he shows that if $G$ is a one-ended, finitely presented group that is not commensurable to a surface group, then $G$ splits over a two-ended group if and only if the Cayley graph of $G$ with respect to a finite generating set is separated by a quasi-line. We refer the reader to~\cite{Pa05} for the definitions of quasi-line and separation. 

	\begin{question} 
		Let $G$ be a one-ended, finitely presented group, not commensurable to a surface group. Suppose that $G$ has the constant retaining property.  Is the Cayley graph of $G$ with respect to a finite generating set separated by a quasi-line?
	\end{question}

	While constant containment implies constant retaining, the converse does not hold, as exhibited for example by the free group of rank two~\cite{DMT17}. More interesting examples are the free abelian groups of rank at least three; the proof that these groups do not satisfy the constant containment property is due to Develin and Hartke~\cite{DH07}.  We suspect that these groups do not satisfy the constant retaining property either.

	\begin{question}\label{question:zcube}
		Does the group $\Z^3$ have the constant retaining property?
	\end{question}

	A connected graph has the \emph{finite-step polynomial retaining property of degree $d$} if there is a polynomial $f$ of degree $d$ such that  any initial fire $X_0$ admits a retaining $f$-strategy $\{W_n\}$ such that $W_n$ is empty for all but finitely many $n$.  A corollary of the proof of Theorem~\ref{introthm:1} is that the finite-step polynomial retaining property of degree $d$ is a quasi-isometry invariant in the class of uniformly locally finite graphs, see Corollary~\ref{cor:qi-fsrp}.  In regard to Question~\ref{question:zcube} above, the group $\Z^3$ does have the finite-step retaining property of degree one, see Remark~\ref{rem:differentd}.

	The finite-step polynomial retaining property of degree zero is abbreviated as the \emph{finite-step retaining property}. For finitely generated groups, this property essentially captures splittings over finite subgroups.

	\begin{theorem}\label{thm:finite-step-retaining}
		Let $G$ be a finitely generated group.
		\begin{enumerate}
			\item \label{item:1thm-finite-step} If $G$ splits over a finite group, then $G$ has the finite-step retaining property. 
			\item \label{item:2thm-finite-step} If $G$ has the finite-step retaining property, then $G$ has the constant containment property or $G$ splits over a finite group. 
		\end{enumerate}
	\end{theorem}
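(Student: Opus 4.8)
The plan is to treat the two items separately; the common tool is Stallings' theorem on ends of finitely generated groups: such a group splits over a finite subgroup if and only if it has at least two ends. Throughout, $\mathrm{Cay}(G)$ denotes a Cayley graph of $G$ with respect to a finite generating set.

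\emph{Part (1).} Assume $G$ splits over a finite subgroup $C$ (the case of finite $G$ being trivial), so by Stallings $G$ has two or infinitely many ends. The argument follows the method behind Theorem~\ref{introthm:2}: build a wall far ahead of the fire and then stop. What makes the resulting strategy \emph{finite-step} here is precisely that $C$ is \emph{finite}, so the wall will be a finite vertex set and the wall-building terminates. From the splitting one extracts a finite set $L$ of vertices whose removal disconnects $\mathrm{Cay}(G)$ into at least two unbounded components. The geometric heart of the proof is to choose $L$ and one such component $\Omega$ so that $\Omega$ has the same growth rate as $G$ with respect to the metric of $\mathrm{Cay}(G)$: if $G$ has two ends it is virtually cyclic with linear growth and any such $\Omega$ is coarsely a ray; if $G$ has infinitely many ends then $G$ has exponential growth, and, using the Bass--Serre tree $T$ of the splitting --- whose branch vertices are coarsely dense because the action is cocompact --- one checks that, far enough out, $\Omega$ contains binary branching of unbounded depth and so also has exponential growth. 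Granting this, fix a finite $X_0\subseteq B(1,R)$ and choose $h\in G$ with $\dist(X_0,hL)>\abs{L}$ and with $h\Omega$ lying in the component of $\mathrm{Cay}(G)\setminus hL$ disjoint from $X_0$ (possible since translates of a finite set escape to infinity and $\mathrm{Cay}(G)\setminus L$ has at least two unbounded components). Let $W_n$ protect the $n$-th vertex of $hL$ for $1\le n\le\abs{L}$ and be empty afterwards; the fire cannot reach $hL$ before turn $\dist(X_0,hL)>\abs{L}$, by which time $hL$ is entirely protected, so $h\Omega$ never burns and the never-burnt set $U$ contains $h\Omega$, hence has the growth rate of $G$. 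This $\{W_n\}$ is a retaining strategy with $\abs{W_n}\le 1$, eventually empty, and with a constant independent of $X_0$, so $G$ has the finite-step retaining property.

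\emph{Part (2).} Suppose $G$ has the finite-step retaining property, witnessed by a constant $K$, and $G$ does \emph{not} split over a finite subgroup; I claim $G$ has the constant containment property. By Stallings $G$ has at most one end; if $G$ is finite this is clear, so assume $G$ is infinite and one-ended. Fix a finite $X_0$ and a finite-step retaining strategy $\{W_n\}$ with $\abs{W_n}\le K$ for all $n$ and $W_n=\emptyset$ for $n>T_0$; then $P:=\bigcup_n W_n$ is \emph{finite}. Since $\mathrm{Cay}(G)$ is connected and locally finite, $\mathrm{Cay}(G)\setminus P$ has finitely many components, and since $G$ is one-ended exactly one of them, $\Omega_\infty$, is unbounded; hence $G\setminus\Omega_\infty$ is finite, i.e.\ $\Omega_\infty$ is cofinite. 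All of $P$ is protected by turn $T_0$ and nothing is protected afterwards, so from turn $T_0$ on the fire spreads freely through $\mathrm{Cay}(G)$, avoiding only the successfully protected vertices. If the fire ever reached a vertex of $\Omega_\infty$ then, since $\Omega_\infty$ is a component of $\mathrm{Cay}(G)\setminus P$ containing no protected vertex, all of $\Omega_\infty$ would eventually burn, so $U\subseteq G\setminus\Omega_\infty$ would be finite, contradicting that $\{W_n\}$ retains (retaining forces $U$ to have the growth rate of the infinite group $G$, hence to be infinite). So the fire never reaches $\Omega_\infty$, whence $\Omega_\infty\subseteq U$ and $U$ is cofinite; thus $\{W_n\}$ is in fact a containment $K$-strategy for $X_0$. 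As $X_0$ was arbitrary and $K$ fixed, $G$ has the constant containment property.

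The step I expect to be the main obstacle is the growth-rate claim inside Part~(1) when $G$ has infinitely many ends: one must confirm that some unbounded component of $\mathrm{Cay}(G)$ minus a finite wall can be selected with the same exponential growth as $G$. The intuition is that deleting a finite vertex set can destroy only finitely many of the separating sets coming from the splitting, so far from $X_0$ the component still meets the coarsely dense branch vertices of $T$, along which a full branching pattern --- and hence exponential growth --- persists; making this rigorous requires care with the dictionary between $T$ and $\mathrm{Cay}(G)$, since edge stabilizers are finite but vertex stabilizers need not be. Part~(2), by contrast, should be a clean formal argument: a finite-step strategy protects only finitely many vertices, removing finitely many vertices from a one-ended Cayley graph leaves a cofinite unbounded exterior, and retaining the full growth rate there is incompatible with the fire ever reaching that exterior.
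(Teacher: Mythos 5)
Your Part~(2) is correct and is essentially the contrapositive of the paper's Proposition~\ref{prop:FiniteStepEnds} specialized to Cayley graphs: you invoke Stallings up front to reduce to the one-ended case and then argue directly that a finite-step retaining strategy for a one-ended group is forced to be a containment strategy, because removing the finite set $P=\bigcup W_n$ from a one-ended locally finite Cayley graph leaves a unique cofinite unbounded component, and retaining (hence $U$ infinite) forces that component to lie in $U$. The paper instead proves a purely graph-theoretic Proposition~\ref{prop:FiniteStepEnds} (finite-step retaining of degree $d$ but not containment of degree $d$ forces at least two ends), then specializes using the end structure of finitely generated groups and the fact that two-ended groups have constant containment. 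The logical content is the same; the paper's version is slightly more general in that it applies to arbitrary locally finite graphs.

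Part~(1), however, has a genuine gap, and it is exactly the one you flag: the claim that some deep unbounded component of $\mathrm{Cay}(G)\setminus hL$ has the same growth rate as $G$. Your proposed route via the Bass--Serre tree, coarse density of branch vertices, and exponential growth is not carried out, and making it rigorous would require substantial work (you'd also need to treat the two-ended case separately, as you note). The paper avoids this entirely with Lemma~\ref{lem:Mama}, a short and more elementary argument: if $U$ contains the vertices of a deep component of $\Gamma\setminus B_G(C,\con)$, then for each $n$ one picks a vertex $v_n\in U$ at distance exactly $n+1$ from $L=B_G(C,\con)$ lying in that deep component; since $\dist(v_n,L)=n+1$, the entire ball $B_\Gamma(v_n,n)$ lies in the same component and hence in $U$; and a translation by an element of $C$ (which stabilizes $L$ and has cobounded orbit on $L$) brings the foot of this ball uniformly close to a fixed basepoint, giving $\abs{B_\Gamma(e,n)}\le\abs{B_\Gamma(v_0,2n+K+1)\cap U}$ and hence $\growth(G)\preceq\growth(U)$. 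This argument makes no case analysis on ends and needs no growth-type dichotomy; it works uniformly, and it is also what the paper uses for Theorem~\ref{thm:splitpoly}. You should replace your growth-rate sketch with an argument of this type (or simply cite Lemma~\ref{lem:Mama}). A minor additional point in Part~(1): you require $\dist(X_0,hL)>\abs{L}$ to finish the wall before the fire arrives, but you also need $\dist(X_0,hL)>\diam X_0$ (as in the paper) to guarantee that $X_0$ lies in a single component of $\mathrm{Cay}(G)\setminus hL$ and hence that some deep component misses $X_0$; take $h$ far enough to satisfy both.
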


	The two statements of Theorem~\ref{thm:finite-step-retaining} correspond to Proposition~\ref{cor:split-finite} and Corollary~\ref{cor:Tomasz}, respectively, in the main body of the article. 


	\begin{corollary}
		Let $G$ be a non-amenable finitely generated group. Then $G$ is one-ended if and only if $G$ does not have the finite-step retaining property. 
	\end{corollary}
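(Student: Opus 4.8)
The plan is to deduce this corollary from Theorem~\ref{thm:finite-step-retaining} together with basic facts about ends of groups. Recall Stallings' theorem: a finitely generated group has more than one end if and only if it splits over a finite subgroup (as an amalgamated product or HNN-extension), and a finitely generated group has either $0$, $1$, $2$, or infinitely many ends, with $0$ ends iff the group is finite and $2$ ends iff the group is virtually cyclic. Since a non-amenable group is necessarily infinite and not virtually cyclic, a non-amenable finitely generated group $G$ has either one end or infinitely many ends; and having infinitely many ends is, by Stallings, equivalent to splitting over a finite subgroup.

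First I would prove the contrapositive-friendly direction: if $G$ is \emph{not} one-ended, then $G$ has the finite-step retaining property. By the discussion above, $G$ has infinitely many ends, so by Stallings' theorem $G$ splits over a finite subgroup, and then Theorem~\ref{thm:finite-step-retaining}\eqref{item:1thm-finite-step} gives that $G$ has the finite-step retaining property.

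Second I would prove the converse: if $G$ has the finite-step retaining property, then $G$ is not one-ended. By Theorem~\ref{thm:finite-step-retaining}\eqref{item:2thm-finite-step}, either $G$ splits over a finite group or $G$ has the constant containment property. In the first case $G$ has more than one end by Stallings, hence (being non-amenable, so infinite and non-virtually-cyclic) has infinitely many ends, so is not one-ended. In the second case I must rule out the constant containment property for a one-ended non-amenable group. Here I would invoke the relationship between containment properties and growth established in~\cite{DMT17, emp16}: a non-amenable group has constant containment (equivalently, polynomial containment of some degree) only under growth constraints that force a non-amenable group to be finite or virtually cyclic — a non-amenable group has exponential growth, and a one-ended non-amenable group cannot satisfy the constant containment property. (The precise citation is the statement, referenced in the excerpt, that for non-amenable groups having polynomial containment of degree $d'$ is equivalent to having polynomial growth of degree $d$; a non-amenable group has no polynomial growth bound, so it cannot have the constant containment property at all.) Hence the second case is vacuous for non-amenable $G$, and we conclude $G$ splits over a finite group, so $G$ is not one-ended.

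The main obstacle is the second case of the converse direction: ruling out the constant containment property for one-ended non-amenable groups. This requires correctly locating and applying the right statement from~\cite{DMT17} or~\cite{emp16} — essentially that constant containment is incompatible with exponential growth (or at least with the specific structure of an infinite, non-virtually-cyclic, non-amenable group), so that for non-amenable $G$ the finite-step retaining property forces a splitting over a finite subgroup. Once that incompatibility is quoted, the rest is an immediate bookkeeping exercise with Stallings' theorem and the end-count trichotomy for non-amenable groups.
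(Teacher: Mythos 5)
Your proposal is correct and follows the same strategy as the paper: apply Theorem~\ref{thm:finite-step-retaining} in both directions, use Stallings' theorem plus the end-count trichotomy for non-amenable (hence infinite, non-virtually-cyclic) groups, and rule out the constant containment property for non-amenable groups. The only cosmetic difference is that the paper cites~\cite[Corollary~8]{emp16} directly for the fact that non-amenable groups fail the constant containment property, whereas you derive the same fact from the equivalence (stated in the introduction) between polynomial containment and polynomial growth for non-amenable groups together with the fact that non-amenable groups have exponential growth; and the paper uses Corollary~\ref{cor:Tomasz} which already concludes ``infinitely many ends,'' while you pass through ``splits over a finite group'' and apply Stallings once more — both routes are valid and amount to the same argument.
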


	\begin{proof}
		Since $G$ is non-amenable, it does not satisfy the constant containment property~\cite[Corollary~8]{emp16}, and $G$ has either one or infinitely many ends~\cite[Part~I, Theorem~8.32(1,2,3)]{BH99}. 

		Suppose that $G$ has infinitely many ends. By Stallings' theorem~\cite[Part~I, Theorem~8.32(5)]{BH99}, $G$ splits over a finite group and hence Theorem~\ref{thm:finite-step-retaining}\eqref{item:1thm-finite-step} implies that $G$ has the finite-step retaining property.
		Conversely, by Theorem~\ref{thm:finite-step-retaining}\eqref{item:2thm-finite-step}, if $G$ has the finite-step retaining property then $G$ has infinitely many ends. 
	\end{proof}

	Regarding the statement of Theorem~\ref{introthm:2}, we believe that  the hypothesis that the subgroup $C$ is quasi-isometrically embedded could be weakened. For example, we expect a positive answer to the following.

	\begin{question}
		Let $G$ be a finitely generated group isomorphic to an amalgamated product $A\ast_C B$. Suppose that $A$ is hyperbolic relative to $C$, and that $C$ has polynomial growth of degree $d>0$.  Does $G$ have polynomial retaining property of degree $d-1$?
	\end{question}
	

	\subsubsection*{Organization} 
	
	Section~\ref{sec:preliminaries} contains some preliminary material on the notion of growth rate of graphs and quasi-isometry of metric spaces. Detailed definitions of the retaining property, containment property and finite-step retaining properties, as well as some preliminary results, are the content of Section~\ref{sec:GameDefinitions}. The proofs
	of Theorems~\ref{introthm:1} and~\ref{introthm:2} are given in Sections~\ref{sec:qi} and~\ref{seq:splitting} respectively. 

	\subsubsection*{Acknowledgements}
	Both authors thank the referees for corrections and suggestions on the manuscript.
	E.\ M.\ P.\ acknowledges partial funding by  the Natural Sciences and Engineering Research Council of Canada (NSERC). T.\ P.\ was supported by the EPSRC First Grant EP/N033787/1.

	\section{Preliminaries}\label{sec:preliminaries}
	 
	\subsection{Growth rate}\label{def:growthrate} 
	
	Given non-decreasing functions $f \colon \N\to \N$ and  $g \colon \N \to \N$, the relation $f \preceq g$  is defined as the existence of an integer $C > 0$ such that \[f(n) \leq C g(Cn + C) + C\] for every $n$. 
	The functions $f$ and $g$ have \emph{equivalent growth rate}, denoted by $f\sim g$, if $f \preceq g$ and $g \preceq f$.  For a locally finite metric space $(X, \dist)$, the \emph{growth function} $\beta_{X,A} \colon \N \to \N$ with respect to a non-empty finite subset $A\subset X$ is defined as \[\beta_{X,A} (n) = \abs{ B_X(A,n)},\] where \[B_X(A,n)=\{x\in X \colon \dist(A,x)\leq n\}.\] Observe that for any two finite subsets $A,B$ of $X$ the growth functions $\beta_{X,A}$ and $\beta_{X,B}$  have equivalent growth rate. The \emph{growth rate of $X$}, denoted by $\growth (X)$, is the equivalence class of $\beta_{X,A}$ with respect to the equivalence relation $\sim$. 
	For locally finite metric spaces $X$ and $Y$,  define $\growth (X) \preceq \growth (Y)$ if $\beta_{X,A} \preceq \beta_{Y,B}$ for some (and hence for any) choices of finite subsets $A\subseteq X$ and $B \subseteq Y$.

	\begin{remark}\label{rem:growth}
		The following statements are easy to verify. 
		\begin{enumerate}
			\item \label{it:growth1} If $\growth (X) \preceq \growth (Y)$  and $\growth (Y) \preceq \growth (X)$  then  $\growth (X) = \growth (Y)$. 

			\item \label{it:growth2} Let $U$ be a subset of a locally finite metric space $X$. Consider $U$ as a metric space with the metric induced from $X$. Then $\growth (U) = \growth (X)$ if and only if $\growth(X) \preceq \growth(U)$.

			\item \label{it:growth3}  A locally finite metric space  $X$ is \emph{uniformly locally finite} if there is a function $g\colon \N \to \N$ such that for any $x\in X$ the ball $B_X(x,n)$ has cardinality at most $g(n)$. If $X$ and $Y$ are quasi-isometric   uniformly locally finite metric spaces then $\growth (X) = \growth (Y)$ (the defintion of \emph{quasi-isometry} is recalled in Subsection~\ref{subsec:quasiiso} below). 
		\end{enumerate}
	\end{remark}

	\subsection{Graphs as metric spaces} 

	Let $G$ be a graph. We say that $G$ is \emph{uniformly locally finite} if there is a constant $M$ such that every vertex of $G$ is adjacent to at most $M$ vertices. A \emph{path of length $n$} is a sequence of vertices $v_0,v_1, \ldots , v_n$ such that $v_i, v_{i+1}$ are connected by an edge for each $i<n$. The graph $G$ is \emph{connected} if there is a path between any two vertices of $G$.  Assume that $G$ is connected. The set of vertices of $G$ is denoted by $V(G)$. The notion of path defines a metric on the set of vertices of $G$ by declaring $\dist_G(x, y)$ to be the length of the shortest path from $x$ to $y$; we call this metric the \emph{edge-path  metric}.  

	Let $X$ and $Y$ be subsets of $V(G)$. The \emph{ball of radius $r$ centered at $X$}, denoted by $B_G(X,r)$, is defined as the collection of vertices at distance less than or equal to $r$ from at least one vertex in $X$. The distance $\dist_G(X,Y)$ is defined as the minimum of distances $\dist_G(x,y)$ where $x \in X$ and $y \in Y$. The \emph{diameter of $X$} denoted by $\diam X$ is defined as $\sup\{ \dist (x_1, x_2) \colon x_1,x_2 \in X \}$. 

	\subsubsection*{Growth in graphs}

	If a graph $G$ is uniformly locally finite then the set of vertices of $G$ with the edge-path metric $\dist_G$ is a   uniformly locally finite metric space. Define $\growth (G)$ as the growth rate of 
	$(V(G), \dist_G)$. For any subset $U \subset V(G)$, we denote by $\growth (U)$ the growth rate of $U$ with the metric being the restriction of $\dist_G$ to $U$.  

	\subsection{Quasi-isometry}\label{subsec:quasiiso}

	Let $(X,\dist_X)$ and $(Y,\dist_Y)$ be metric spaces and let $C>0$ be a constant. A map $\phi \colon X\to Y$ is a \emph{$C$-quasi-isometric embedding}  if  for all $x_1,x_2\in X$ we have
	\[ \frac1C \dist_X (x_1,x_2) - C \leq \dist_Y (\phi(x_1), \phi(x_2)) \leq C \dist_X (x_1,x_2)+C.\]

	A $C$-quasi-isometric embedding $\phi\colon X \to Y$ is  a \emph{$C$-quasi-isometry} if every point of $Y$ lies in the $C$-neighborhood of the image of $\phi$.  We say that the metric spaces $X$ and $Y$ are \emph{quasi-isometric} if there is a $C$-quasi-isometry from $X$ to $Y$ for some constant $C$. 

	Note that the identity function on a metric space is a $1$-quasi-isometry and that the composition of a $C$-quasi-isometry  with a $C'$-quasi-isometry is a $C''$-quasi-isometry for some $C''$ that depends only on $C$ and $C'$. Moreover, if $\phi \colon X\to Y$ is a $C$-quasi-isometry then there is a $C'$-quasi-isometry $\psi\colon Y \to X$ such that $\dist_X( x, \psi\circ \phi (x)) \leq C'$ for all $x\in X$, see~\cite[page~138]{BH99}. 

	Let $G$ and $H$ be connected graphs. A \emph{$C$-quasi-isometry $\phi\colon G\to H$} is a $C$-quasi-isometry from the vertex set of $G$ with its edge-path metric into the vertex set of $H$ with its edge-path metric.  We say that the graphs $G$ and $H$ are \emph{quasi-isometric} if their vertex sets with their corresponding  edge-path metrics are quasi-isometric metric spaces.  A connected subgraph $H$ of the connected graph $G$ is \emph{quasi-isometrically embedded} if the corresponding inclusion map on the vertex sets is a quasi-isometric embedding with respect to the edge-path metrics. 

\subsection{Finitely generated Groups and Cayley Graphs}
Let $G$ be a group with a finite generating set $S$. The \emph{Cayley graph $\Gamma(G,S)$} is the graph with vertex set $G$  and edge set $\{\{g,gs\}\colon g\in G \text{ and }  s\in S\}$. The natural left action of $G$  on $\Gamma(G,S)$ has finitely many orbits of vertices and edges. Two finitely generated groups are quasi-isometric if their Cayley graphs with respect to some (and hence for any) finite generating sets are quasi-isometric. For a fixed finite generating set of a group, the edge-path metric on the corresponding Cayley graph and the induced \emph{word-metric} on the group coincide. A finitely generated subgroup $C$ of  a finitely generated group $G$ is \emph{quasi-isometrically embedded} if for some (and hence any) finite generating set $S$ of $G$ containing a finite generating set $T$ of $C$ the Cayley graph of $C$ with respect to $T$ is quasi-isometrically embedded in the Cayley graph of $G$ with respect to $S$. For a detailed discussion of these matters we refer the reader to~\cite{BH99}.

	\section{The fire retaining property for graphs}\label{sec:GameDefinitions}

	In this section we define the retaining property, containment property and finite-step retaining properties. In Figure~\ref{fig:map} we show how these properties relate to each other and we give some examples of graphs satisfying these properties.\medskip 

	Let $G$ be a connected graph. Let $r>0$ be a positive integer that we shall call the \emph{fire reach}. Let $\{f_n\}_{n\geq 1}$ be a sequence of positive integers that we shall call the \emph{strategy bound}. The player of the game shall be called the \emph{firefighter}. 
	Let $X_0$ be a finite subset of vertices of $G$ that we shall call the \emph{initial fire}. 

	\subsection{Strategies} 

	A \emph{$\{f_n\}_{n\geq 1}$-strategy} is a sequence $\{W_n\}_{n\geq 1}$ of subsets of vertices of $G$ such that  for every $n\geq 1$, the set $W_n$ has cardinality at most $f_n$.  The set $W_n$ is called the \emph{set of vertices to protect at time $n$}.  If the sequence $\{f_n\}_{n\geq 1}$ is constant, i.e.,\ if $f_n=f$, then an $\{f_n\}_{n\geq 1}$-strategy is called an \emph{$f$-strategy}.

	\subsection{Vertices on fire at time $n$}\label{def:Xn}

	Now we define \emph{the set $X_n$ of vertices on fire at time $n$ with respect to the $\{f_n\}$-strategy $\{W_n\}_{n\geq 1}$, and the initial fire $X_0$ of reach $r$.} In words, the set $X_n$ consists of all the vertices of $G$ that can be reached from a vertex of $X_{n-1}$ by a path of length at most $r$, which avoids all vertices that have been protected up to time $n$. Since a vertex that is on fire at some time of the game  remains on fire for the rest of the game, the set of vertices that are protected at time $n$ is \[(W_1\cup \cdots \cup W_n) \setminus X_{n-1}.\] Formally, for each integer $n>0$, the subset $X_n$ consists of vertices which are connected to a vertex of $X_{n-1}$ by a path of length at most $r$ containing no vertices in $(W_1\cup \cdots \cup W_n) \setminus X_{n-1}$.  

	The set $\bigcup_{n\geq 0} X_n$ shall be called \emph{the set of vertices on fire at the end of the game with respect to the $\{f_n\}$-strategy $\{W_n\}_{n\geq 1}$, and the initial fire $X_0$ of reach $r$.}

	\subsection{Equivalent strategies}

	The $\{f_n\}_{n\geq 1}$-strategies $\{W_n\}_{n\geq 1}$ and $\{V_n\}_{n\geq 1}$ are \emph{equivalent for the initial fire $X_0$ of reach $r$} if the corresponding sets of vertices on fire at time $n$ for both strategies are equal for every $n\geq 0$.

	\begin{remark} 
		Let $\{W_n\}_{n\geq 1}$ be a strategy and let $X_0$ be an initial fire of reach $r$. Let $X_n$ denote the set of vertices on fire at time $n$ for the given data. Observe that the definitions above do not imply that $X_{n}\cap W_{n+1}=\emptyset$. In words, at time $n+1$, the firefighter might be unable to protect a vertex $v$ in $W_{n+1}$ because $v$ caught fire at an earlier stage of the game. This can be avoided by passing to an equivalent strategy, as the following lemma states.
	\end{remark}

	\begin{lemma}\label{lem:efficient}
		Let $X_n$ be the set of vertices on fire at time $n$ with respect to the $\{f_n\}$-strategy $\{W_n\}_{n\geq 1}$ and initial fire $X_0$ of reach $r$. Then there is an $\{f_n\}$-strategy $\{W'_n\}_{n\geq 1}$ equivalent to $\{W_n\}_{n\geq 1}$ such that $X_n \cap W'_{n+1} =\emptyset$ for every $n$. Specifically, $W'_{n+1}=W_{n+1}\setminus X_n$ for all $n>0$.
	\end{lemma}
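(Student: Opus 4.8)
The plan is to verify directly that the strategy prescribed by the formula in the statement does the job. Set $W'_1 := W_1$ and, for $n\geq 1$, let $W'_{n+1} := W_{n+1}\setminus X_n$. Since $W'_n\subseteq W_n$ for every $n$, we have $\abs{W'_n}\leq \abs{W_n}\leq f_n$, so $\{W'_n\}_{n\geq 1}$ is an $\{f_n\}$-strategy. Granting that $\{W'_n\}$ is equivalent to $\{W_n\}$ for the initial fire $X_0$ of reach $r$, the sets of vertices on fire at each time coincide, and then for $n\geq 1$ we get $X_n\cap W'_{n+1}=X_n\cap(W_{n+1}\setminus X_n)=\emptyset$, as required. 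So the whole content of the lemma is the equivalence.

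To prove equivalence, let $\{X'_n\}_{n\geq 0}$ be the sequence of vertices on fire produced by $\{W'_n\}$ from $X_0$ with reach $r$, and show by induction on $n$ that $X'_n=X_n$. The base case is $X'_0=X_0$. For the inductive step, recall from Subsection~\ref{def:Xn} that $X_{n+1}$ is completely determined by $X_n$ together with the set $(W_1\cup\cdots\cup W_{n+1})\setminus X_n$ of vertices protected by time $n+1$: a vertex lies in $X_{n+1}$ exactly when it is joined to a vertex of $X_n$ by a path of length at most $r$ containing no vertex of that set. The same description applies to $X'_{n+1}$ with $X'_n$ and $(W'_1\cup\cdots\cup W'_{n+1})\setminus X'_n$ in place of $X_n$ and $(W_1\cup\cdots\cup W_{n+1})\setminus X_n$. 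Assuming $X'_n=X_n$, it therefore suffices to prove the equality
\[ (W'_1\cup\cdots\cup W'_{n+1})\setminus X_n \;=\; (W_1\cup\cdots\cup W_{n+1})\setminus X_n. \]

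The key point is monotonicity of the fire: since any vertex on fire stays on fire (a length-$0$ path witnesses $X_{k-1}\subseteq X_k$), we have $X_0\subseteq X_1\subseteq\cdots$, so $X_{k-1}\subseteq X_n$ whenever $k\leq n+1$. Hence for each such $k$,
\[ W'_k\setminus X_n=(W_k\setminus X_{k-1})\setminus X_n=W_k\setminus X_n, \]
the case $k=1$ being trivial since $W'_1=W_1$; taking the union over $k=1,\dots,n+1$ yields the displayed identity. This closes the induction, establishes the equivalence, and hence the disjointness. I do not expect a genuine obstacle here; the only thing requiring care is to phrase the inductive step in terms of the \emph{net} set of protected vertices $(W_1\cup\cdots\cup W_{n+1})\setminus X_n$, since that is precisely the data on which the passage $X_n\mapsto X_{n+1}$ depends in the definition of Subsection~\ref{def:Xn}.
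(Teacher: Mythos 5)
Your proof is correct, and since the paper itself omits the argument (``The proof is straightforward and is left to the reader''), there is nothing to compare against. Your key observation — that the update $X_n\mapsto X_{n+1}$ depends only on the net protected set $(W_1\cup\cdots\cup W_{n+1})\setminus X_n$, together with the monotonicity $X_{k-1}\subseteq X_n$ for $k\le n+1$ which gives $W'_k\setminus X_n = W_k\setminus X_n$ — is exactly the content that makes the lemma ``straightforward,'' and your induction is clean. One small remark: with your convention $W'_1=W_1$, the disjointness $X_n\cap W'_{n+1}=\emptyset$ holds for $n\ge 1$ but not necessarily for $n=0$; if one instead takes $W'_1=W_1\setminus X_0$ (reading the statement's formula as holding for $n\ge 0$), the same argument gives disjointness for all $n\ge 0$, and the case $k=1$ of your identity follows from $X_0\subseteq X_n$ just as for $k>1$.
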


	The proof is straightforward and is left to the reader.

	\subsection{Retaining strategies}

	Given an $\{f_n\}$-strategy $\{W_n\}_{n\geq 1}$ and the initial fire $X_0$ of reach $r$, define $U$ to be the complement of $\bigcup_{n=0}^\infty X_n$ in the vertex set of $G$. Thus $U$ is the set of vertices of $G$ that at the end of the game are not on fire. 

	The strategy $\{W_n\}_{n\geq 1}$ is called a \emph{retaining $\{f_n\}$-strategy for the initial fire $X_0$ of reach $r$} if $\growth(U) = \growth(G)$ where the metric on $U$ is the restriction of the edge-path metric on the vertex set of $G$.  

	If we wish to emphasize the reach of the fire, we will write that $\{W_n\}_{n\geq 1}$ is a \emph{retaining $(\{f_n\},r)$-strategy for} $X_0$.

	\subsection{Retaining property} 

	The graph $G$ has the \emph{$(\{f_n\}, r)$-retaining  property} if for every finite subset $X_0$ of vertices of $G$ there is a retaining $\{f_n\}$-strategy for $X_0$ as the initial fire of reach $r$. 

	We will use the following abbreviations:
	\begin{enumerate}
		\item  $G$ has the \emph{$\{f_n\}$-retaining property} means that $G$ has the $(\{f_n\}, r)$-retaining  property for $r=1$.
		\item  $G$ has \emph{polynomial retaining property of degree $d$} means that there is a constant $K>0$ such that $G$ has the $\{Kn^d\}$-retaining property.
		
		\item  $G$ has the \emph{$f$-retaining property} means that $G$ has the $\{f_n\}$-retaining  property for the constant sequence $f_n=f$. 
		
		\item  $G$ has \emph{constant retaining property} means that $G$ has the $f$-retaining property for some integer $f$, or equivalently, $G$ has  polynomial retaining property of degree zero.
	\end{enumerate}

	The following two observations are straightforward.

	\begin{remark} 
		If for every vertex $x\in G$ and every integer $n\geq 0$ there is a retaining {$(\{f_n\},r)$-strategy for the initial fire $X_0=B_G(x, n)$}, then $G$  has the $(\{f_n\},r)$-retaining property.   
	\end{remark}

	\begin{remark}\label{rem:rimplies1}
		If $G$ has the $(\{f_n\},r)$-retaining property, then it has the $(\{f_n\},1)$-retaining property.  
	\end{remark}

	The following lemma is a partial converse to Remark~\ref{rem:rimplies1}.
	
	\begin{lemma}\label{lem:FireReach}
		If $G$ has the $(\{f_n\},1)$-retaining property, then it has the $(\{a_n\},r)$-retaining property where \[a_n = f_{(n-1)r+1} +\cdots + f_{nr}.\]
		Specifically, if $\{W_n\}_{n\geq 1}$ is a retaining $(\{f_n\},1)$-strategy for $X_0$ then $\{V_n\}_{n\geq1}$, where $V_n = W_{(n-1)r+1} \cup \cdots \cup W_{nr}$, is a retaining $(\{a_n\}, r)$-strategy for $X_0$. 
	\end{lemma}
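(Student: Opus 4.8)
The plan is to show that the aggregated strategy $\{V_n\}$, played with reach $r$, never burns more than the original strategy $\{W_n\}$ played with reach $1$; then the hypothesis that $\{W_n\}$ is a retaining $(\{f_n\},1)$-strategy for $X_0$ immediately transfers to $\{V_n\}$. The point is that one reach-$r$ turn simulates a block of $r$ consecutive reach-$1$ turns, and the vertices protected in that one reach-$r$ turn are exactly those protected over the corresponding block of reach-$1$ turns.

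First I would check the elementary bookkeeping: $|V_n|\le\sum_{k=(n-1)r+1}^{nr}|W_k|\le\sum_{k=(n-1)r+1}^{nr}f_k=a_n$, so $\{V_n\}$ is indeed an $\{a_n\}$-strategy (and $a_n\ge r\ge 1$), while $V_1\cup\cdots\cup V_n=W_1\cup\cdots\cup W_{nr}$. Write $X_m$ for the set of vertices on fire at time $m$ for $\{W_n\}$, $X_0$, reach $1$, and $Y_n$ for the set on fire at time $n$ for $\{V_n\}$, $X_0$, reach $r$. The key claim is $Y_n\subseteq X_{nr}$ for all $n\ge 0$, proved by induction on $n$. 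The case $n=0$ is $Y_0=X_0=X_0$. For the inductive step, take $v\in Y_{n+1}$ witnessed by a path $p_0,p_1,\ldots,p_k$ with $k\le r$, $p_0\in Y_n$, $p_k=v$, containing no vertex of $(V_1\cup\cdots\cup V_{n+1})\setminus Y_n$; an inner induction on $j$ shows $p_j\in X_{nr+j}$ for $0\le j\le k$. Indeed $p_0\in Y_n\subseteq X_{nr}$ by the outer hypothesis, and given $p_j\in X_{nr+j}$, the vertex $p_{j+1}$ is adjacent to $p_j$ and, lying on the path, either belongs to $Y_n\subseteq X_{nr}\subseteq X_{nr+j}$ (so it is already on fire) or satisfies $p_{j+1}\notin V_1\cup\cdots\cup V_{n+1}=W_1\cup\cdots\cup W_{(n+1)r}\supseteq W_1\cup\cdots\cup W_{nr+j+1}$ (using $nr+j+1\le(n+1)r$), hence is not protected at time $nr+j+1$ in the reach-$1$ game; in either case $p_{j+1}\in X_{nr+j+1}$. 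Taking $j=k$ gives $v=p_k\in X_{nr+k}\subseteq X_{(n+1)r}$.

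Finally, since the sets $X_m$ are nested and $\{nr\}_{n\ge 0}$ is cofinal in $\N$, the claim gives $\bigcup_n Y_n\subseteq\bigcup_m X_m$, so $U':=V(G)\setminus\bigcup_n Y_n$ contains $U:=V(G)\setminus\bigcup_m X_m$. Because $\{W_n\}$ is a retaining $(\{f_n\},1)$-strategy for $X_0$ we have $\growth(U)=\growth(G)$, and from $U\subseteq U'\subseteq V(G)$ we get $\growth(G)=\growth(U)\preceq\growth(U')\preceq\growth(G)$; Remark~\ref{rem:growth}\eqref{it:growth2} then yields $\growth(U')=\growth(G)$, i.e.\ $\{V_n\}$ is a retaining $(\{a_n\},r)$-strategy for $X_0$. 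As $X_0$ was an arbitrary finite initial fire, $G$ has the $(\{a_n\},r)$-retaining property. The main obstacle is the inner induction: one must match the single reach-$r$ protected set $V_1\cup\cdots\cup V_{n+1}$ against the cumulative reach-$1$ protected sets and correctly handle a vertex on the witnessing path that already caught fire and therefore does not block, which is where the ``$\setminus X_{n-1}$'' in the definition of the set of vertices on fire at time $n$ must be used with care; replacing $\{W_n\}$ by an equivalent efficient strategy via Lemma~\ref{lem:efficient} is an available simplification but is not strictly needed.
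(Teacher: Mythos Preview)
Your proof is correct and follows the same underlying idea as the paper: aggregate $r$ consecutive reach-$1$ turns into a single reach-$r$ turn and compare the resulting fire sets. The paper's version is terser: after invoking Lemma~\ref{lem:efficient} to assume $X_n\cap W_{n+1}=\emptyset$, it simply asserts the \emph{equality} $Y_n=X_{rn}$ and concludes $\bigcup Y_n=\bigcup X_n$. You instead prove only the inclusion $Y_n\subseteq X_{nr}$ via the inner induction along the witnessing path, which is enough once you appeal to growth monotonicity and Remark~\ref{rem:growth}\eqref{it:growth2}; this lets you avoid the efficient-strategy reduction entirely (as you note). The trade-off is that your argument is a bit longer but more self-contained, while the paper's is shorter but leaves the verification of $Y_n=X_{rn}$ to the reader.
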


	\begin{proof}
		Let $X_n$ denote the set of vertices on fire at time $n$ with respect to the retaining $(\{f_n\},1)$-strategy $\{W_n\}_{n\geq 1}$ for $X_0$. Without loss of generality, assume that $X_n\cap W_{n+1}=\emptyset$ for all $n$; see Lemma~\ref{lem:efficient}. Let $Y_0=X_0$, and let $Y_n$ denote the set of vertices on fire at time $n$ with respect to the strategy $\{V_n\}_{n\geq 1}$ for the initial fire $Y_0$ of reach $r$. It is immediate that $\abs{V_n} \leq a_n$.   
		Observe that \[ Y_n=X_{rn}\] for every $n$. 
		Hence $\bigcup_{n=0}^\infty X_n =\bigcup_{n=0}^\infty Y_n $, and therefore  $\{V_n\}_{n\geq1}$ is a retaining $(\{a_n\}, r)$-strategy for $Y_0=X_0$.
	\end{proof}

	\subsection{Finite-step retaining property} 

	An $\{f_n\}$-strategy $\{W_n\}_{n\geq 1}$ is a \emph{finite-step retaining $\{f_n\}$-strategy for $X_0$} if $\{W_n\}_{n\geq 1}$ is a retaining $\{f_n\}$-strategy for the initial fire $X_0$ of reach one, and $W_n=\emptyset$ for all sufficiently large $n$.   The graph $G$ has the  \emph{finite-step polynomial retaining property of degree $d$} if there is a  polynomial sequence $\{f_n\}$ of degree $d$ such that every finite subset $X_0$ of vertices of $G$ admits a finite-step retaining $\{f_n\}$-strategy. The finite-step polynomial retaining property of degree zero is abbreviated as the \emph{finite-step retaining property}. 

	Observe that the finite-step retaining property of degree $d$ implies the finite-step retaining property of degree $d'$ for any $d' \geq d$. However, we will see in Remark~\ref{rem:differentd} that the converse implication does not hold.

	\subsection{Containment property}\label{def:firegame2}  

	An $\{f_n\}$-strategy  $\{W_n\}_{n\geq 1}$ is  a  \emph{containment $\{f_n\}$-strategy for the initial fire $X_0$ of reach $r$} if $\bigcup_{n=0}^\infty X_n$ is finite.  The graph $G$ has the \emph{$(\{f_n\}, r)$-containment  property} if for every finite subset $X_0$ of vertices of $G$ there is a containment $\{f_n\}$-strategy for $X_0$ as an initial fire of reach $r$. The containment property on infinite graphs has been studied in~\cite{DMT17}.  Similarly as above, a graph $G$ has \emph{polynomial containment of degree $d$} if there is $K>0$ such that $G$ has the $(\{Kn^d\},1)$-containment property; $G$ has the $f$-containment property if it has the $(\{f_n\},1)$-containment property for the constant sequence $f_n=f$;  and $G$ has the \emph{constant containment property} if  $G$ has polynomial containment property of degree zero. Observe that containment strategies are in particular (finite-step) retaining strategies.

	\begin{figure}[!h]

	\centering

		\begin{tikzpicture}[scale=0.98]

		\small

		\definecolor{red}{RGB}{200,000,000}
		\definecolor{blue}{RGB}{000,000,200}

			\draw [rounded corners=15] (0,2) rectangle ++(12,8);

				\begin{scope}[shift={(0.3,9.5)}]

					\node [blue, right]   at (0,0)  {$\text{Retaining property}$};

					\node [right]   at (0,-0.5)  {$\{f_n\}_{n \geq 0} - \text{Strategy bound}$};
					
					\node [ right]   at (0,-1)  {$\growth(G) = \growth(U)$};
					
				\end{scope}

			\draw [rounded corners=15] (0,2) rectangle ++(12,4);

				\begin{scope}[shift={(0.3,5.5)}]

					\node [blue, right]   at (0,0)  {$\text{Finite-step retaining property}$};

					\node [right]   at (0,-0.5)  {$W_n = \emptyset$ \text{ for $n \gg 0$ }};

				\end{scope}

			\draw [rounded corners=15] (0,2) rectangle ++(12,2);

				\begin{scope}[shift={(0.3,3.5)}]

					\node [blue, right]   at (0,0)  {$\text{Containment property}$};

					\node [right]   at (0,-0.5)  {$|X| < \infty $};

				\end{scope}

			\draw [rounded corners=15] (5,2) rectangle ++(7,8);

				\begin{scope}[shift={(5.3,9.5)}]

					\node [red, right]   at (0,0)  {$\text{Polynomial of degree $d>0$}$};

					\node [right]   at (0,-0.5)  {$f_n= Kn^d$};

				\end{scope}

			\draw [rounded corners=15] (8.7,2) rectangle ++(3.3, 6.5);

				\begin{scope}[shift={(9,8)}]

					\node [red, right]   at (0,0)  {$\text{Constant}$};

					\node [right]   at (0,-0.5)  {$f_n= f $};

				\end{scope}

			\begin{scope}[shift={(9,6.5)}]

				\node [violet, right]   at (0,0.2)  {$A \ast_{\mathbb{Z}}B $};

				\node [violet, right]   at (0,-1)  {$A \ast_{C} B $};

				\node [violet, right]   at (0,-1.5)  {$\text{where $C$ is finite}$};

				\node [violet, right]   at (0, -3)  {$\mathbb{Z} \text{, } \mathbb{Z}^2$,};

				\node [violet, right]   at (0,-3.5)  {$\text{growth at most}$};

				\node [violet, right]   at (0, -4)  {$\text{quadratic}$};

			\end{scope}

			\begin{scope}[shift={(5.3,8.1)}]

				\node [violet, right]   at (0,0)  {$A \ast_{C} B $};
				
				\node [violet, right]   at (0,-0.5)  {$\text{where $C$ has growth}$};
				
				\node [violet, right]   at (0,-1)  {$\text{polynomial of }$};
				
				\node [violet, right]   at (0,-1.5)  {$\text{degree } d-1$};

			\end{scope}

			\begin{scope}[shift={(5.3,3.5)}]		

				\node [violet, right]   at (0,2)  {$\emptyset$};

				\node [violet, right]   at (0,0)  {$\mathbb{Z}^{d+2},$};
				
				\node [violet, right]   at (0,-0.5)  {$\text{growth polynomial}$};
				
				\node [violet, right]   at (0,-1)  {$\text{of degree } d+2$};

			\end{scope}

		\end{tikzpicture}

	\caption{The relation between retaining and containment properties and the growth of the strategy bound. The fire reach is assumed to be one. Examples of groups satisfying various properties are presented.}
	\label{fig:map}

	\end{figure}

	\section{Quasi-isometry invariance}\label{sec:qi}  

	In this section we prove the following result. 

	\begin{theorem}\label{thm:QI2}
		Let $G$ and $H$ be connected graphs with degree bounded above by a constant $\delta$.
		Let $\phi\colon G\to H$ and $\psi\colon H\to G$ be $c$-quasi-isometries, where $c$ is a positive integer such that $\dist (u, \psi\circ \phi (u))\leq c$ for every vertex $u$ of $G$.  Suppose that $G$ has the $\{f_k\}_{k\geq 1}$-retaining property. Then $H$ has the $\{b_k\}_{k\geq 1}$-retaining property where
	 	\begin{equation}\label{eq:proofQI} 
	 		b_k = \left(f_{2c(k-1)+1}+f_{2c(k-1)+2}+\cdots +f_{2ck} \right) \delta^{c^2+2c+1}. 
	 	\end{equation}

	\end{theorem}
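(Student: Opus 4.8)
The plan is to transport a winning retaining strategy from $G$ to $H$ along the quasi-isometries. The property to be established for $H$ concerns fire of reach $1$, while $\psi$ carries an edge of $H$ to a pair of vertices of $G$ at distance at most $2c$; so the first step is to feed this reach into $G$. By Lemma~\ref{lem:FireReach} with $r=2c$, the $\{f_k\}$-retaining property of $G$ gives the $(\{a_n\},2c)$-retaining property, where $a_n=f_{2c(n-1)+1}+\cdots+f_{2cn}$ is precisely the bracketed sum in~\eqref{eq:proofQI}. Now fix a finite initial fire $Y_0\subseteq V(H)$, put $X_0:=\psi(Y_0)$, and choose a retaining $(\{a_n\},2c)$-strategy $\{W_n\}_{n\ge 1}$ for $X_0$ in $G$; write $\{X_n\}$ for the resulting fire and $U_G:=V(G)\setminus\bigcup_{n\ge 0}X_n$ for the survivor set, so $\growth(U_G)=\growth(G)$. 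By Lemma~\ref{lem:efficient} we may assume $W_n\cap X_{n-1}=\emptyset$ for all $n$.

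Define the strategy in $H$ by transporting neighbourhoods of the $G$-wall: $V_n:=\psi^{-1}\bigl(B_G(W_n,2c)\bigr)$, and let $\{Y_n\}$ be the fire produced in $H$ from $Y_0$ by $\{V_n\}$ with reach $1$. A routine count shows $|V_n|\le b_n$ (the exponent $c^2+2c+1$ in~\eqref{eq:proofQI} reflects the radius-$2c$ neighbourhood in $G$ together with the fact that the fibres of $\psi$ have diameter at most $c^2$ in $H$). The crux of the transport is the \emph{shadowing property} $\psi(Y_n)\subseteq X_n$ for all $n\ge 0$, proved by induction. The base case is $\psi(Y_0)=X_0$. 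For the inductive step, take $y\in Y_n$; if $y\in Y_{n-1}$ then $\psi(y)\in X_{n-1}\subseteq X_n$, so assume instead that $y$ is reached from some $H$-neighbour $y'\in Y_{n-1}$ along a path avoiding the vertices protected by time $n$, so in particular $y\notin V_1\cup\cdots\cup V_n$. By induction $\psi(y')\in X_{n-1}$, and $\dist_G(\psi(y'),\psi(y))\le 2c$; let $\gamma$ be a $G$-geodesic from $\psi(y')$ to $\psi(y)$. If $\gamma$ met a vertex $w\in(W_1\cup\cdots\cup W_n)\setminus X_{n-1}$, then $\dist_G(\psi(y),w)\le 2c$ would give $\psi(y)\in B_G(W_1\cup\cdots\cup W_n,2c)$ and hence $y\in V_1\cup\cdots\cup V_n$, contradicting the previous sentence. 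Thus $\gamma$ avoids $(W_1\cup\cdots\cup W_n)\setminus X_{n-1}$, which is exactly what is needed to conclude $\psi(y)\in X_n$.

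From the shadowing property, $\psi\bigl(\bigcup_{n\ge 0}Y_n\bigr)\subseteq\bigcup_{n\ge 0}X_n$, so the survivor set $U_H:=V(H)\setminus\bigcup_{n\ge 0}Y_n$ contains $\psi^{-1}(U_G)$; it remains only to prove $\growth(U_H)=\growth(H)$, equivalently (Remark~\ref{rem:growth}\eqref{it:growth2}) $\growth(H)\preceq\growth(U_H)$. I expect this to be the main obstacle, and the place where the hypotheses on $\phi$ are needed. The inclusion $U_H\supseteq\psi^{-1}(U_G)$ alone does not suffice: a quasi-isometry may carry a ``thin'' full-growth subset of $G$ to one with empty preimage in $H$, so in general $\growth(\psi^{-1}(U_G))\ne\growth(U_G)$. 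The remedy I would pursue uses the relation $\dist_G(u,\psi\phi(u))\le c$: if $u\in U_G$ satisfies $B_G(u,c)\subseteq U_G$ then $\psi\phi(u)\in B_G(u,c)\subseteq U_G$, hence $\phi(u)\in\psi^{-1}(U_G)\subseteq U_H$; and since $\phi$ is a quasi-isometry and the metric on a subset is the restriction of $\dist_G$, one has $\growth\bigl(\phi\bigl(U_G\setminus B_G(\bigcup_{n\ge 0}X_n,c)\bigr)\bigr)=\growth\bigl(U_G\setminus B_G(\bigcup_{n\ge 0}X_n,c)\bigr)$.

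Hence everything reduces to showing that deleting a bounded neighbourhood of the burnt region from $U_G$ does not change its growth rate. This is false for an arbitrary subset of $G$, so here one must use that $U_G$ is the survivor set of a retaining strategy — and, if needed, one is free to first replace $\{W_n\}$ by the strategy with the thickened wall $W'_n:=B_G(W_n,\rho)$ for a suitable $\rho=\rho(c)$, since thickening the wall only shrinks the burnt region (one checks $X'_n\subseteq X_n$ by induction) and so keeps the survivor set large, while making the deep survivor region well-separated from the burnt region. Carrying this ``thick survivor region'' step out rigorously — controlling the components into which the wall cuts $G$ and bounding their contribution to the growth — is where I expect the real work of the proof to lie.
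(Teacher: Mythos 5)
Your transport mechanism works, but takes a genuinely different route from the paper's. The paper sets $X_0=B_G(\psi h_0,\,2c(q+2))$, defines the protected sets in $H$ via $\phi$ by $Q_k=\bigcup_{g\in W_k}B_H(\phi g,\,c^2+2c)\setminus Y_{k-1}$, and establishes the shadowing statement ``$h\in Y_k\Rightarrow\psi h\in X_{k-1}$'' by invoking Lemma~\ref{prop:DMT15} (a lemma imported from~\cite{DMT17} with a fairly intricate path-chasing induction). You instead take $X_0=\psi(Y_0)$ and $V_n=\psi^{-1}\bigl(B_G(W_n,2c)\bigr)$, pulling the wall back through $\psi$ rather than pushing it forward through $\phi$, and your inductive proof of $\psi(Y_n)\subseteq X_n$ is considerably shorter. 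That part is sound. Two bookkeeping caveats: your count gives $|V_n|\le|W_n|\,\delta^{2c+1}\cdot\delta^{c^2+1}=|W_n|\,\delta^{c^2+2c+2}$, one power of $\delta$ above the $b_k$ of~\eqref{eq:proofQI}, so you would need a sharper ball estimate to reproduce the statement verbatim; and, unlike the paper's $Q_k$, your $V_n$ is not excised by $Y_{n-1}$, which is harmless for the fire dynamics but means the two strategies in $H$ are not literally the same.

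The real issue is the final step, and you correctly identify it: from $\psi^{-1}(U_G)\subseteq U_H$ and $\growth(U_G)=\growth(G)$ one cannot immediately conclude $\growth(U_H)=\growth(H)$, because nothing a priori prevents $U_G\cap\mathrm{im}(\psi)$ from being much thinner than $U_G$. Your write-up does not close this gap. The thickened-wall idea $W'_n=B_G(W_n,\rho)$ does not obviously work as stated: replacing $W_n$ by $W'_n$ only shrinks the burnt set $\bigcup X_n$ and hence enlarges the survivor set, but it does not force the \emph{original} survivors, whose growth you control, to lie deep inside the new survivor set, nor does it make the set $U_G\setminus B_G\bigl(\bigcup X_n,c\bigr)$ growth-equivalent to $U_G$. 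So, as you say yourself, the real work remains undone here.

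For comparison, the paper resolves this step in one line by asserting that the restriction $\psi\colon\psi^{-1}(U_G)\to U_G$ is a quasi-isometry (with induced metrics) and then appealing to Remark~\ref{rem:growth}\eqref{it:growth3}. That the restriction is a quasi-isometric \emph{embedding} is automatic; coarse density of $\psi(\psi^{-1}(U_G))=U_G\cap\mathrm{im}(\psi)$ in $U_G$ is not, and this is exactly the phenomenon you flagged. So your worry is not a misreading of the intended route — the paper's proof simply asserts the fact you are unable to prove, without further justification. If you want to make the argument airtight you should either find a reason, specific to survivor sets of the chosen retaining strategy and the ball-shaped $X_0$, why $U_G\cap\mathrm{im}(\psi)$ is coarsely dense in $U_G$, or choose the retaining strategy in $G$ with more care rather than taking an arbitrary one; the raw statement ``$\growth\bigl(U\cap\mathrm{im}(\psi)\bigr)=\growth(U)$ for any $U$ with $\growth(U)=\growth(G)$'' is false.
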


	\begin{remark}\label{rem:QI2}
		If the sequence $\{f_k\}_{k\geq 1}$ is non-decreasing then the  sequences $\{f_k\}_{k\geq 1}$ and $\{b_k\}_{k\geq 1}$ have equivalent growth rate in the sense of Subsection~\ref{def:growthrate}, since $f_k \leq b_k \leq  2c\delta^{c^2+2c+1} f_{2ck}$ for every $k>0$.
	\end{remark}

	The following corollary is a direct consequence of Theorem~\ref{thm:QI2} and Remark~\ref{rem:QI2}.

	\begin{corollary}\label{thm:qi}
		Let $G$ and $H$ be uniformly locally finite connected graphs. 
		Suppose that $G$ is quasi-isometric to $H$.  If $G$ has polynomial retaining property of degree $d$  then $H$ has polynomial retaining property of degree $d$.
	\end{corollary}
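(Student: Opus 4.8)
The plan is to obtain the corollary as a direct specialization of Theorem~\ref{thm:QI2}, so almost all of the work is already packaged there; I only need to feed in the right constants. First I would fix an integer $\delta$ bounding the degree of every vertex of $G$ and of $H$ simultaneously (take the larger of the two bounds supplied by uniform local finiteness). Since $G$ and $H$ are quasi-isometric, the standard quasi-inverse construction (cf.\ \cite[page~138]{BH99}), after rounding all constants up to a common integer $c$, produces a pair of $c$-quasi-isometries $\phi\colon G\to H$ and $\psi\colon H\to G$ with $\dist(u,\psi\phi u)\le c$ for every vertex $u$ of $G$. These are exactly the standing hypotheses of Theorem~\ref{thm:QI2}.

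Next I would unwind the hypothesis on $G$: polynomial retaining of degree $d$ means there is $K>0$ with $G$ enjoying the $\{Kn^{d}\}$-retaining property, so set $f_{k}=Kk^{d}$. This sequence is non-decreasing, hence both Theorem~\ref{thm:QI2} and Remark~\ref{rem:QI2} apply: $H$ has the $\{b_{k}\}$-retaining property with $b_{k}$ given by \eqref{eq:proofQI}, and $b_{k}\le 2c\,\delta^{c^{2}+2c+1}f_{2ck}=2c(2c)^{d}\delta^{c^{2}+2c+1}K\,k^{d}$ for every $k\ge 1$. Writing $K':=2c(2c)^{d}\delta^{c^{2}+2c+1}K$, we get $b_{k}\le K'k^{d}$ for all $k\ge 1$; since any $\{b_{n}\}$-strategy is also a $\{K'n^{d}\}$-strategy and leaves the same set of vertices unburnt, $H$ has the $\{K'n^{d}\}$-retaining property. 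Thus $H$ has polynomial retaining of degree $d$, as required.

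At the level of the corollary there is no genuine obstacle; the two points needing a word of justification — that $\delta$, $c$, and the displacement bound can be chosen uniformly for $G$ and $H$, and that enlarging the strategy bound preserves a retaining property — are routine. The real content is Theorem~\ref{thm:QI2}. If I were to prove that from scratch, the approach would be to transport a retaining strategy from $G$ to $H$: given a finite initial fire $Y_{0}$ in $H$, take a retaining $\{f_{n}\}$-strategy in $G$ for the initial fire $\psi(Y_{0})$, rescale time by the factor $2c$ (one $H$-edge pulls back under $\psi$ to a $G$-path of length at most $2c$; here Lemma~\ref{lem:FireReach} is used), and at turn $k$ protect in $H$ the ball of radius $c^{2}+2c$ about the $\phi$-image of everything $G$ protects during the corresponding block of $2c$ turns; the volume bound $\abs{B_{H}(v,c^{2}+2c)}\le \delta^{\,c^{2}+2c+1}$ is exactly what converts the $G$-budget into the $H$-budget in \eqref{eq:proofQI}. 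One then shows by induction on $k$ that the fire in $H$ at turn $k$ lies in a fixed bounded neighborhood of the $\phi$-image of the fire in $G$, so the surviving set in $H$ contains the $\phi$-image of the set of vertices of $G$ lying beyond a fixed distance from the fire; since $\phi$ restricts there to a quasi-isometry and growth is a quasi-isometry invariant of uniformly proper spaces (Remark~\ref{rem:growth}), that surviving set carries the full growth rate $\growth(H)=\growth(G)$. The step I expect to be the main obstacle is this induction — checking that the fattened transported protected sets genuinely stop the transported fire — together with the subtler point that the set of $G$-vertices far from the fire must still carry full growth; this is why one should start from a sufficiently robust initial fire in $G$ (or allow a larger fire reach there) rather than from $\psi(Y_{0})$ alone.
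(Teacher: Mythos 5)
Your argument is correct and follows exactly the paper's route: the corollary is obtained by specializing Theorem~\ref{thm:QI2} with $f_k=Kk^d$ and invoking Remark~\ref{rem:QI2} (you merely unpack the resulting constant $K'=2c(2c)^d\delta^{c^2+2c+1}K$ explicitly, and your observation that enlarging the strategy bound preserves a retaining strategy is the right minor point to flag). Your closing sketch of how Theorem~\ref{thm:QI2} itself is proved also tracks the paper's proof via Lemma~\ref{lem:FireReach} and Lemma~\ref{prop:DMT15}, including the caveat that the initial fire in $G$ must be a fattened ball rather than just $\psi(Y_0)$.
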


	The proof of Theorem~\ref{thm:QI2} relies on the following statement proved in~\cite[page~18]{DMT17}. The proof is transcribed below for the convenience of the reader. 

	\begin{lemma}\cite[Lemma~4.5]{DMT17}\label{prop:DMT15}
		Let $h_0$ be a vertex of $H$ and let $g_0=\psi h_0$. Let $q$ be a positive integer and let $r=c^2+2c$.  Let $\{W_k\}_{k\geq 1}$ and $\{X_k\}_{k\geq 0}$ be sequences of subsets of $V(G)$ such that for all $k\geq 0$ we have:
		\begin{enumerate}
			\item $X_0= B_G\left(g_0, 2c(q+2)\right)$,
			\item the sets $X_k$ and $W_{k+1}$ are disjoint,
			\item the set  $X_k$ consists of the vertices which are connected to a vertex in $X_{k-1}$ by a path of length at most $2c$ containing no vertices in $W_1\cup \cdots \cup W_k$.
		\end{enumerate}
		Let $Y_0=B_H(h_0,q)$, and for $k\geq 1$ define \[Q_k = \bigcup_{ g \in W_k}   B_H (\phi g, r)   \setminus Y_{k-1} \qquad \text{and} \qquad Y_{k}=  B_H(Y_{k-1}, 1) \setminus Q_{k} .\] 
		Then  for   all $k\geq 1$ we have:
		\begin{enumerate}
			\item \label{it:lemQI1} the sets $Q_k$ and $Y_{k-1}$ are disjoint and the cardinality of $Q_k$ is at most $\delta^{r}|W_k|$,
			\item \label{it:lemQI2}  if $h \in Y_k$ then $\psi h \in X_{k-1}$.
		\end{enumerate}
	\end{lemma}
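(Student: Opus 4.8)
The plan is to treat the two assertions separately: the cardinality bound is a direct count, and the second assertion is proved by induction on $k$. For the cardinality bound there is essentially nothing to do. Disjointness of $Q_k$ and $Y_{k-1}$ is automatic, since $Q_k$ is defined by deleting $Y_{k-1}$ from a set. For the size estimate, $Q_k\subseteq\bigcup_{g\in W_k}B_H(\phi g,r)$, and a ball of radius $r$ in a graph of maximum degree $\delta$ has at most $\delta^{r}$ vertices, so $\abs{Q_k}\le\delta^{r}\abs{W_k}$.

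Before the induction I would record two monotonicity facts to be used repeatedly: $X_{k-1}\subseteq X_k$ and $Y_{k-1}\subseteq Y_k$ for every $k\ge 1$. The first follows from the recursion defining $X_k$ once one checks, using that $X_k\cap W_{k+1}=\emptyset$ and the description of $X_k$, that $X_{k-1}$ avoids $W_1\cup\cdots\cup W_k$ (so each of its vertices joins itself along the trivial path); the second holds because $Y_{k-1}\subseteq B_H(Y_{k-1},1)$ and $Y_{k-1}\cap Q_k=\emptyset$ by construction. The base case $k=1$ is then a one-line quasi-isometry estimate: if $h\in Y_1$ then $h\in B_H(Y_0,1)=B_H(h_0,q+1)$, so $\dist(\psi h,g_0)=\dist(\psi h,\psi h_0)\le c(q+1)+c\le 2c(q+2)$, whence $\psi h\in X_0$.

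For the inductive step, assume the claim for $k$ and take $h\in Y_{k+1}$. If $h\in Y_k$, then the inductive hypothesis together with monotonicity gives $\psi h\in X_{k-1}\subseteq X_k$. Otherwise $h\notin Y_k$, and since $h\in B_H(Y_k,1)$ we may choose $h'\in Y_k$ with $\dist(h,h')\le 1$; the inductive hypothesis gives $\psi h'\in X_{k-1}$, and $\dist(\psi h,\psi h')\le 2c$ because $\phi,\psi$ are $c$-quasi-isometries, so there is a path $P$ in $G$ from $\psi h$ to $\psi h'$ of length at most $2c$. It then suffices to show that $P$ meets no vertex of $W_1\cup\cdots\cup W_k$, for in that case $\psi h\in X_k$ by the recursion defining $X_k$. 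Suppose instead that some $w\in W_m$ with $m\le k$ lay on $P$. Then $\dist(\psi h,w)\le 2c$, and transporting this bound to $H$ through $\phi$ while controlling $\dist(h,\phi\psi h)$ by means of the quasi-isometry relations would yield $\dist(h,\phi w)\le r$. Hence $h\in B_H(\phi w,r)$ with $w\in W_m$; since $h\notin Y_{m-1}$ (because $Y_{m-1}\subseteq Y_k$ and $h\notin Y_k$), this forces $h$ into the protected set $Q_m$, which is impossible, as a protected vertex never catches fire yet $h\in Y_{k+1}$. Therefore $P$ avoids $W_1\cup\cdots\cup W_k$, and the induction closes.

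I expect the genuine obstacle to be precisely the metric estimate $\dist(h,\phi w)\le r$ inside the inductive step: it requires calibrating the fire reach $2c$ in $G$ and the protection radius $r=c^{2}+2c$ in $H$ against the distortion of the pair $(\phi,\psi)$ — notably against $\dist(u,\psi\phi u)\le c$ and the control it yields on $\dist(v,\phi\psi v)$ — so that the inequality really does hold with the stated constants. The remaining difficulties are bookkeeping: keeping the one-step shift between $Y_k$ and $X_{k-1}$ aligned throughout, and using that the protections in both games accumulate, so that once a vertex enters some $Q_m$ it never rejoins the fire.
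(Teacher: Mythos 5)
Your proposal diverges structurally from the paper's argument, and the divergence introduces two genuine gaps.

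First, the contradiction you invoke at the end — ``$h\in Q_m$, which is impossible, as a protected vertex never catches fire yet $h\in Y_{k+1}$'' — does not follow from the lemma's recursion. The lemma defines $Y_k=B_H(Y_{k-1},1)\setminus Q_k$, subtracting only the \emph{current} $Q_k$, not the cumulative union $Q_1\cup\cdots\cup Q_k$. Consequently $h\in Q_m$ only yields $h\notin Y_m$; nothing prevents $h$ from belonging to $Y_{m+1},\ldots,Y_{k+1}$. The ``protected stays protected'' invariant is a property of the game in Section~3, but the lemma is a purely recursive statement about the sets $Q_k,Y_k$, and your argument needs to work from that recursion alone. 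Tellingly, you never use hypothesis~(2) of the lemma ($X_k\cap W_{k+1}=\emptyset$) at the contradiction step, whereas the paper's proof hinges on it: the offending vertex $g\in W_t$ on the transported path is shown to lie in $X_{t-1}$, and that is what contradicts $X_{t-1}\cap W_t=\emptyset$.

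Second, the metric estimate $\dist_H(h,\phi w)\le r$ is not available in all cases, and you correctly flag this as ``the genuine obstacle'' without resolving it. The obstruction $w$ lies somewhere on a path $P$ of length at most $2c$ between $\psi h$ and $\psi h'$. If $\dist_G(\psi h,w)\le c$ the estimate goes through, but if instead $w$ is on the far half, $\dist_G(\psi h',w)\le c$ and the best one can do is $\dist_H(h,\phi w)\le c^2+2c+1=r+1$, exceeding $r$. The paper handles exactly this by a more careful selection: it works with the full lifted path $\gamma$ from $\psi h_0$ to $\psi h$, picks the obstruction $g\in W_t$ together with the vertex of the form $\psi h_j$ on $\gamma$ that lies within distance $c$ and such that the subpath between them contains no other $W$-vertices, and then uses the dichotomy $h_j\in Q_t$ or $h_j\in Y_{t-1}$. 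The first branch is killed by the ``path avoids $Q_1\cup\cdots\cup Q_k$'' property; the second branch feeds into the induction via $\psi h_j\in X_{t-2}$, from which $g\in X_{t-1}$ follows and contradicts hypothesis~(2). Your one-step version misses this dichotomy entirely: after ruling out $h\in Y_{m-1}$ you are forced into the $Q_m$ branch, but as explained above that branch does not produce a contradiction. To salvage your approach you would need to apply the dichotomy to $h'$ (or to whichever endpoint lies within $c$ of $w$) and route the argument through $\psi h'\in X_{m-2}$ and $w\in X_{m-1}$, which is precisely the paper's mechanism.

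The base case and the cardinality/disjointness assertions in your write-up are fine, and the monotonicity of $\{X_k\}$ and $\{Y_k\}$ that you record is correct (the former does require hypothesis~(2), as you note). The gap is concentrated in the inductive step.
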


	\begin{proof}
		Observe that the first statement is immediate.  
		The second statement  is proved by  induction on $k$. First let \[ r_k = 2c(q+k+2).\]
		Observe that  $X_k$ consists of vertices $g \in G$ such that there is a path from $g_0$ to $g$ of length at most $r_k$ that does not contain vertices in $W_1\cup \cdots \cup W_k$.

		\emph{Base case:} If $h \in Y_1$  then $\dist_H (h_0, h)\leq q+1$ and hence
		\[\dist_G(g_0, \psi h) \leq c(q+1)+c \leq 2c(q+2).\]
		It follows that $\psi h$ belongs to $X_0=B_G(g_0, r_0)$.

		\emph{Induction step:} Suppose $2\leq k$. The induction hypothesis is that $h \in Y_j$ implies $\psi h \in X_{j-1}$ for all $j<k$.
		Suppose $h\in Y_k$. Then there exists a path
		\[h_0, h_1, h_2, \ldots , h_\ell=h\]
		such that $\ell \leq q+k$ and no $h_i$ is in $Q_1\cup \cdots \cup Q_k$. Consider the sequence of vertices
		\[\psi h_0, \psi h_1, \psi h_2, \ldots , \psi h_\ell.\]
		Since $\dist_G (\psi h_{i-1} , \psi h_i)\leq  c\dist_H (h_i, h_{i+1}) + c =2c$,  there is a path $\gamma_i$ of length at most $2c$ from $\psi h_{i-1}$ to $\psi h_i$. Consider the path $\gamma$ from $\psi h_0$ to $\psi h$ resulting from the concatenation $\gamma_1 \cdots \gamma_\ell$. Observe that the length of $\gamma$ is at most $2c\ell \leq 2c(q+k) \leq r_{k-1}$. To conclude that $\psi h \in X_{k-1}$, it is enough to show that no vertex of $\gamma$ is in the set $W_1\cup \cdots \cup W_{k-1}$.

		Suppose there are vertices of $\gamma$ in $W_1\cup \cdots \cup W_{k-1}$. By construction,  each vertex of $\gamma$  is at distance at most $c$ from a vertex  of the form $\psi h_i \in \gamma$. Choose a vertex $g$ of $\gamma$  and a vertex of the form $\psi h_j$  of $\gamma$ (they might be the same vertex) with the following  properties:
		\begin{enumerate}
			\item $g \in  W_1\cup \cdots \cup W_{k-1}$,
			\item the subpath of $\gamma$ between $g$ and $\psi h_j$ has length at most $c$ and it has only one vertex in  $W_1\cup \cdots \cup W_{k-1}$, namely $g$.
		\end{enumerate}

		Let  $t\leq k-1$ be the smallest integer such that $g\in W_t.$ Since
		\[\dist_G(\phi g, h_j) \leq  \dist_H ( \phi g, \phi \psi h_j) + \dist_H (\phi \psi h_j, h_j) \leq c^2+2c  = r,\]
		either $h_j \in Q_t$ or $h_j \in Y_{t-1}$. The former case is impossible by the assumption on the path from $h_0$ to $h$. Therefore $h_j \in Y_{t-1}$ and then the induction hypothesis implies that $\psi h_j \in X_{t-2}$. Since the subpath of $\gamma$ between $\psi h_j$ and $g$ has no vertices in $W_1\cup \cdots \cup W_{t-1}$ and $\psi h_j \in X_{t-2}$, it follows that $g\in X_{t-1}$. This implies that $g\notin W_t$ which is a contradiction.  
	\end{proof}

	\begin{proof}[Proof of Theorem~\ref{thm:QI2}]

		Suppose that $G$ has the $\{f_k\}_{k\geq1}$-retaining property. By Lemma~\ref{lem:FireReach},  the graph $G$ has the $(\{a_k\}_{k\geq1}, 2c)$-retaining property where
		$a_k = f_{2c(k-1)+1}+f_{2c(k-1)+2}+\cdots +f_{2ck}.$ Let 
		\[b_k = a_k \delta^{c^2+2c+1}.\] 
		We claim that $H$ has the $\{b_k\}_{k\geq1}$-retaining property as a consequence of Lemma~\ref{prop:DMT15}.  Let $h_0$ be a vertex of $H$ and consider the initial fire $Y_0=B_H(h_0, q)$ where $q$ is a positive integer.  

		Let $g_0=\psi h_0$ and consider the initial fire $X_0=B_G(g_0, 2c(q+2))$ of reach $2c$ in $G$. By assumption, there is a retaining $(\{a_k\}, 2c)$-strategy $\{W_k\}_{k\geq1}$ for $X_0$. Let $X_n$ be the set of vertices on fire at time $n$ with respect to this retaining strategy and let $X=\bigcup_{n\geq0} X_n$. 

		Now consider the sequences $\{Q_k\}_{k\geq1}$ and $\{Y_k\}_{k\geq1}$ defined in the statement of Lemma~\ref{prop:DMT15}, and observe that  $Y_k$ corresponds to the set of vertices on fire in $H$ at time $k$ with respect to the $\{b_k\}$-strategy $\{Q_k\}_{k\geq1}$ and initial fire $Y_0$ of reach one. 

		Let $U=G\setminus \bigcup_{n\geq0} X_n$ and $V= H\setminus \bigcup_{n\geq0} Y_n$. 
		 Since $\{W_k\}_{k\geq1}$ is a retaining $(\{a_k\}, 2c)$-strategy for $X_0$, it follows that \[\growth(G) = \growth(U).\] 
		Since $\psi\colon H \to G$ is a quasi-isometry, in particular, the restriction $\psi\colon \psi^{-1}(U) \to U$ is a quasi-isometry (with metrics induced from $H$ and $G$ respectively). It follows that \[\growth(H)=\growth(G)\qquad \text{and} \qquad \growth(U) = \growth(\psi^{-1}(U)).\] 
		By Lemma~\ref{prop:DMT15}\eqref{it:lemQI2}, we have the inclusion $\psi^{-1}(U) \subseteq V$, and hence 
		\[\growth(\psi^{-1}(U)) \preceq \growth(V) \preceq \growth(H).\] 
		From the above relations, it follows that 
		\[ \growth(V) =\growth(H) .\]
		Therefore $\{Q_k\}_{k\geq1}$ is a retaining $\{b_k\}$-strategy for $Y_0$ in $H$.  
	\end{proof}

	As a corollary of the proof of Theorem~\ref{thm:QI2} we obtain the following.

	\begin{corollary}\label{cor:qi-fsrp}
		Let $G$ and $H$ be connected graphs with degree bounded above by a constant $\delta$. Let $\phi\colon G\to H$ and $\psi\colon H\to G$ be $c$-quasi-isometries, where $c$ is a positive integer, and such that $\dist (u, \psi \phi u)\leq c$ for every vertex $u$ of $G$. Suppose that $G$ has the finite-step $\{f_k\}_{k\geq 1}$-retaining property. Then $H$ has the finite-step $\{b_k\}_{k\geq 1}$-retaining property where $b_k$ is given by equation~\eqref{eq:proofQI}. 
	\end{corollary}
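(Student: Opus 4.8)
The plan is to run the proof of Theorem~\ref{thm:QI2} without any change and to observe that each construction appearing in it preserves the property of a protecting sequence being \emph{eventually empty}. Fix a vertex $h_0$ of $H$ and a positive integer $q$, and consider the initial fire $Y_0 = B_H(h_0, q)$ of reach one; as in the proof of Theorem~\ref{thm:QI2} set $g_0 = \psi h_0$ and $X_0 = B_G(g_0, 2c(q+2))$. Since $G$ has the finite-step $\{f_k\}_{k\ge 1}$-retaining property, the fire $X_0$ (of reach one) admits a \emph{finite-step} retaining $\{f_k\}$-strategy, say one that protects nothing after turn $N_0$. Feeding this strategy into the construction of Lemma~\ref{lem:FireReach} with $r = 2c$ yields a retaining $(\{a_k\}_{k\ge 1}, 2c)$-strategy $\{W_k\}_{k\ge 1}$ for $X_0$, with $a_k = f_{2c(k-1)+1} + \cdots + f_{2ck}$; since each $W_k$ is a union of $2c$ consecutive protecting sets of the original reach-one strategy, we have $W_k = \emptyset$ once $2c(k-1)+1 > N_0$, so $\{W_k\}$ is again eventually empty.

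Next, apply Lemma~\ref{prop:DMT15} exactly as in the proof of Theorem~\ref{thm:QI2}: it transports $\{W_k\}$ to the $\{b_k\}$-strategy $\{Q_k\}_{k\ge 1}$ on $H$ defined by $Q_k = \bigcup_{g \in W_k} B_H(\phi g, r) \setminus Y_{k-1}$ with $r = c^2+2c$, and part~\eqref{it:lemQI1} of that lemma gives $|Q_k| \le \delta^{r}|W_k| \le \delta^{r} a_k \le b_k$, where $b_k = a_k\,\delta^{c^2+2c+1}$ is the quantity in equation~\eqref{eq:proofQI}. The growth argument from the proof of Theorem~\ref{thm:QI2} then applies verbatim: writing $U = G \setminus \bigcup_{n\ge 0} X_n$ and $V = H \setminus \bigcup_{n\ge 0} Y_n$, one uses $\growth(G) = \growth(U)$ (as $\{W_k\}$ is retaining for $X_0$), the fact that $\psi$ restricts to a quasi-isometry $\psi^{-1}(U) \to U$, and the inclusion $\psi^{-1}(U) \subseteq V$ furnished by Lemma~\ref{prop:DMT15}\eqref{it:lemQI2}, to conclude $\growth(V) = \growth(H)$. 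Hence $\{Q_k\}$ is a retaining $\{b_k\}$-strategy for $Y_0$.

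The only new observation needed is that $\{Q_k\}$ is finite-step, and this is immediate from the definition of $Q_k$: if $W_k = \emptyset$ then $Q_k = \emptyset$, so $Q_k = \emptyset$ for all $k$ with $2c(k-1)+1 > N_0$. Finally, since every finite subset of $V(H)$ is contained in some ball $B_H(h_0, q)$, and a finite-step retaining $\{b_k\}$-strategy for a larger initial fire is also a finite-step retaining $\{b_k\}$-strategy for any sub-fire (the unburnt set only grows, so its growth rate is still that of $H$), the remark reducing the retaining property to fires that are balls shows that $H$ has the finite-step $\{b_k\}$-retaining property. I do not expect a genuine obstacle here: all of the substance is already contained in the proof of Theorem~\ref{thm:QI2} and in Lemma~\ref{prop:DMT15}, and the corollary amounts to checking that none of those steps disturbs the eventual vanishing of the protecting sets.
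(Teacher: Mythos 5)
Your argument mirrors the paper's proof exactly: apply Lemma~\ref{lem:FireReach} with $r=2c$ (noting the combined sets $V_n = W_{(n-1)r+1}\cup\cdots\cup W_{nr}$ vanish once the original ones do), feed the result through Lemma~\ref{prop:DMT15} as in Theorem~\ref{thm:QI2}, and observe that $W_k=\emptyset$ forces $Q_k=\emptyset$, so the transported strategy is again finite-step. The additional remark that it suffices to treat ball-shaped initial fires (because shrinking $X_0$ only shrinks $\bigcup_n X_n$ and hence enlarges $U$) is correct and was already implicit in the paper's reduction to $Y_0 = B_H(h_0,q)$.
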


	\begin{proof}
		Suppose that $G$ has the finite-step $\{f_k\}_{k\geq 1}$-retaining property. Then by Lemma~\ref{lem:FireReach},  
		the graph $G$ has the $(\{a_n\}, r)$-retaining property where $r=2c$ and
		\[a_n = f_{(n-1)r+1} +\cdots + f_{nr}.\]
		Lemma~\ref{lem:FireReach} further implies that for any initial fire $X_0$ of reach $r$ there is a retaining $\{a_n\}$-strategy $\{W_k\}_{k\geq 1}$ with the additional property that $W_k=\emptyset$ for all $k$ large enough. 

		Consider the initial fire $B_H(h_0, q)$ for some vertex $h_0 \in H$ and some integer $q>0$. Let $g_0= \psi h_0$ and choose an $\{a_n\}$-strategy $\{W_k\}_{k\geq 1}$ for the initial fire  $X_0=B_G(g_0, 2c(q+2))$ in $G$ such that $W_k=\emptyset$ for $k$ large enough.   Then define  $\{Q_k\}_{k\geq 1}$ as in the proof of Theorem~\ref{thm:QI2} by using Lemma~\ref{prop:DMT15}. Note that the choice of $\{W_k\}_{k\geq 1}$ implies that  $Q_k=\emptyset$ for all $k$ large enough. Then the argument proving Theorem~\ref{thm:QI2} shows that  $\{Q_k\}_{k\geq 1}$ is a finite-step retaining $\{b_k\}$-strategy for $Y_0$.
	\end{proof}

	\section{Splittings over quasi-isometrically embedded subgroups}\label{seq:splitting}
	
 	A group $G$ \emph{splits over a subgroup} $C$ if either $G=A\ast_C B$ and $C$ is a proper subgroup of $A$ and $B$, or $G$ is an HNN-extension $A\ast_C$ (with no assumptions on $C$, nor on the isomorphism $\varphi\colon C \to \varphi(C) \subset A$).

	\subsection{Coarse separation in graphs} 
	Let $\Gamma$ be a connected graph with the weak topology and the edge-path metric on its vertex set. If $K$ is a subset of vertices of $\Gamma$, a connected component of $\Gamma\setminus K$ is \emph{deep} if its set of vertices is not contained in $B_\Gamma(K, r)$  for any $r>0$. We shall say that  $K$ \emph{coarsely separates $\Gamma$} if there is $R>0$ such that $\Gamma\setminus B_\Gamma(K, R)$ has at least two deep connected components. 

	\begin{lemma}\cite[Lemma~2.2]{Papa}\label{lem:Papa}
		If a finitely generated group $G$ splits over a finitely generated subgroup $C$, then $C$ coarsely separates any Cayley graph of $G$ with respect to a finite generating set. In particular, $C$ has infinite index in $G$.
	\end{lemma}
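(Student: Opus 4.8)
We sketch a plan for proving Lemma~\ref{lem:Papa}; the idea is to push the fact that a single edge separates the Bass--Serre tree $T$ of the splitting through a coarse orbit map $\Gamma\to T$. Fix a finite symmetric generating set $S$ of $G$ and let $\Gamma$ be the corresponding Cayley graph. Let $T$ be the Bass--Serre tree of the given one-edge splitting; after subdividing once if necessary we may assume $G$ acts on $T$ without inversions, so that some edge $e$ of $T$ has stabiliser exactly $C$. Let $v,w$ be the endpoints of $e$, let $m$ be its midpoint, and let $T_v\ni v$ and $T_w\ni w$ be the two components of $T\setminus\{m\}$. Three facts are worth recording. First, every vertex of $T$ has valence at least $2$ (the valences being $[A:C]\ge 2$ and $[B:C]\ge 2$ in the amalgamated case, and $[A:C]+[A:\varphi(C)]\ge 2$ in the HNN case), so $T_v$ and $T_w$ are unbounded subtrees. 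Second, the orbit map $\pi\colon V(\Gamma)\to V(T)$, $\pi(g)=gv$, is $G$-equivariant and satisfies $\dist_T(\pi g,\pi g')\le D\,\dist_\Gamma(g,g')$ with $D=\max_{s\in S}\dist_T(v,sv)$. Third, $\pi(C)\subseteq\{v,w\}$, whence $\dist_\Gamma(g,C)\ge\tfrac1D(\dist_T(gv,e)-1)$ for every $g$. Put $\Omega=\pi^{-1}(V(T_v))$ and $\Omega'=\pi^{-1}(V(T_w))$; since $V(T)=V(T_v)\sqcup V(T_w)$, these sets partition $V(\Gamma)$, and both are non-empty.

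The first step I would carry out is to show that the ``cut'' between $\Omega$ and $\Omega'$ is coarsely $C$: there is a constant $R_0>0$ such that every $\Gamma$-edge joining $\Omega$ to $\Omega'$ has its $\Omega$-endpoint in $B_\Gamma(C,R_0)$. Indeed, given such an edge $\{g,gs\}$ with $g\in\Omega$, $gs\in\Omega'$ and $s\in S$, applying $g^{-1}$ shows that $v$ and $sv$ lie on opposite sides of the midpoint of $g^{-1}e$, so the $T$-geodesic $[v,sv]$ contains the edge $g^{-1}e$. This geodesic has length $\dist_T(v,sv)\le D$, and as $s$ ranges over the finite set $S$ the edges occurring on these geodesics form a finite set $\mathcal E$. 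Hence $g^{-1}e\in\mathcal E$; writing each $e'\in\mathcal E$ lying in the $G$-orbit of $e$ as $e'=h_{e'}e$ and using $\mathrm{Stab}(e)=C$, one gets $g\in C\,h_{e'}^{-1}$, so $\dist_\Gamma(g,C)\le R_0$ for a constant $R_0$ depending only on $\mathcal E$. An immediate consequence is that every connected component of $\Gamma\setminus B_\Gamma(C,R_0)$ lies entirely in $\Omega$ or entirely in $\Omega'$, since moving along an edge in the complement of $B_\Gamma(C,R_0)$ cannot change sides.

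The second step is to produce a deep component of $\Gamma\setminus B_\Gamma(C,R_0)$ inside each of $\Omega$ and $\Omega'$. The inequality $\dist_\Gamma(g,C)\ge\tfrac1D(\dist_T(gv,e)-1)$ together with the unboundedness of $T_v$ already shows that $\Omega$ contains vertices arbitrarily far from $C$, but to get a deep \emph{component} I would exhibit an explicit quasigeodesic ray escaping to infinity within $\Omega$. In the amalgamated case $G=A\ast_C B$, fix $a_0\in A\setminus C$ and $b_0\in B\setminus C$ and let $w_n$ be the alternating word $a_0b_0a_0b_0\cdots$ of length $n$; by the normal form theorem the $w_n$ are distinct, $\pi(w_n)\in V(T_v)$ with $\dist_T(\pi(w_n),v)\to\infty$, and $\dist_\Gamma(w_n,w_{n+1})\le\max(\dist_\Gamma(1,a_0),\dist_\Gamma(1,b_0))$. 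In the HNN case $G=A\ast_C$ with stable letter $t$, the elements $w_n=t^{-n}$ have the analogous properties (and $\pi(t^{-n})\in V(T_v)$ lies at $T$-distance $n$ from $v$). In either case, joining consecutive $w_n$ by geodesics in $\Gamma$ yields a ray which, past some index, stays outside $B_\Gamma(C,R_0)$; hence it lies in a single component $\mathcal C_v$ of $\Gamma\setminus B_\Gamma(C,R_0)$, and $\mathcal C_v$ is deep because it contains the $w_n$. By the previous step $\mathcal C_v\subseteq\Omega$. The mirror construction --- alternating words starting in $B\setminus C$, respectively the elements $t^{\,n}$ --- gives a deep component $\mathcal C_w\subseteq\Omega'$.

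Since $\mathcal C_v\subseteq\Omega$ and $\mathcal C_w\subseteq\Omega'$ are disjoint, they are two distinct deep components of $\Gamma\setminus B_\Gamma(C,R_0)$, so $C$ coarsely separates $\Gamma$ with $R=R_0$. I expect the second step to be the delicate point: it is easy to see that each of the two sides $\Omega,\Omega'$ reaches arbitrarily far from $C$, but concluding that each contains a single deep connected component after deleting $B_\Gamma(C,R_0)$ --- rather than fragmenting into infinitely many bounded pieces --- seems to require producing a genuine connected path to infinity, which is where the normal form theorem enters and forces the argument to split into the amalgamated-product and HNN cases. The remaining ingredients (the coarse Lipschitz orbit map, the finiteness of $\mathcal E$, the valence bound on $T$) are routine.
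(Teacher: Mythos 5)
The paper does not prove this lemma itself: it is cited from Papasoglu (as the label \texttt{[Papa, Lemma~2.2]} indicates), so there is no proof here to compare against. On its own terms, your argument is correct and is the standard Bass--Serre-theoretic proof of the coarse-separation statement; I would expect it to be essentially the argument in Papasoglu's paper as well.

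A few small remarks on the write-up. The ``subdivide once to kill inversions'' step is harmless but unnecessary: the Bass--Serre tree of a one-edge splitting never has inversions. In the amalgamated case the vertex set is $G/A\sqcup G/B$ and edges always join a coset of $A$ to a coset of $B$, so no element can swap the two endpoints of an edge; in the HNN case an inversion of the edge $gC$ would force $t\in A$. Your stronger claim $\pi(C)\subseteq\{v,w\}$ can in fact be replaced by the equality $\pi(C)=\{v\}$, since $C=\mathrm{Stab}(e)$ fixes both endpoints of $e$ in the absence of inversions, and this also removes the need for the ``$-1$'' in the inequality $\dist_\Gamma(g,C)\ge\tfrac1D\dist_T(gv,e)$. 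Finally, you are right to flag Step~2 as the point requiring care: knowing that $\Omega$ contains points arbitrarily far from $C$ does not by itself yield a single deep component, since $\Omega\setminus B_\Gamma(C,R_0)$ could a priori fragment into infinitely many bounded pieces. Your fix --- joining the $w_n$ by $\Gamma$-geodesics to get a connected set escaping $B_\Gamma(C,R_0)$ --- closes exactly this gap, using that $\dist_\Gamma(w_n,C)\to\infty$ while $\dist_\Gamma(w_n,w_{n+1})$ stays bounded, so the geodesic segments eventually lie entirely outside $B_\Gamma(C,R_0)$ and the tail of the ray sits in one component, which is therefore deep and (by Step~1) contained in $\Omega$. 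Combined with the symmetric construction on the $\Omega'$ side and the disjointness of $\Omega$ and $\Omega'$, this gives two distinct deep components, which is precisely coarse separation with $R=R_0$.
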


	\begin{lemma}\label{lem:Mama}
			Let $\Gamma$ be a Cayley graph  of $G$ with respect to a finite generating set. Let $C$ be a subgroup and let $l>0$ be such that $\Gamma \setminus B_G(C, \con)$ has at least two deep  components.   
		If $U$ is a   subset of vertices of $\Gamma$ 
		that contains $\bigcup_{g\in C} gD$ where $D$ is the set of vertices of a deep  component $D$ of $\Gamma\setminus B_G(C,\con)$, then $\growth(U) = \growth(G)$.
	\end{lemma}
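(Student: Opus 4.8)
Set $L:=B_G(C,\con)$, and recall that a deep component of $\Gamma\setminus L$ is a connected component whose vertex set is not contained in $B_G(L,r)$ for any $r>0$. Let $D\subseteq U$ be the given deep component and fix a vertex $u_0\in D$; the metric on $D$ and on $U$ is the restriction of $\dist_\Gamma$. Since $B_D(u_0,n)\subseteq B_U(u_0,n)\subseteq B_G(u_0,n)$ for every $n$, one gets $\growth(D)\preceq\growth(U)\preceq\growth(G)$ with no work. By Remark~\ref{rem:growth}\eqref{it:growth1} it therefore suffices to prove the reverse inequality $\growth(G)\preceq\growth(D)$.

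Write $\beta_\Gamma$ for a growth function of $\Gamma$ and $\beta_{D,u_0}$ for the growth function of $D$ based at $u_0$. The key local feature is that a deep component is \emph{fat}: if $v\in D$ then $v\notin L$, so $\dist_\Gamma(v,C)\ge\con+1$, and if $w$ lies within distance $\dist_\Gamma(v,C)-\con-1$ of $v$, then every vertex on a geodesic from $v$ to $w$ is at distance $>\con$ from $C$, hence lies outside $L$, so $w$ belongs to the component $D$ of $v$; that is, $B_G\bigl(v,\,\dist_\Gamma(v,C)-\con-1\bigr)\subseteq D$. Consequently, it is enough to prove the following claim: there is a constant $K>0$ such that for every $n\ge1$ there exists $v_n\in D$ with $\dist_\Gamma(u_0,v_n)\le Kn$ and $\dist_\Gamma(v_n,C)\ge n+\con+1$. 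Granting the claim, $B_G(v_n,n)\subseteq D$ and $B_G(v_n,n)\subseteq B_G(u_0,(K+1)n)$, so $B_G(v_n,n)\subseteq B_D\bigl(u_0,(K+1)n\bigr)$, and homogeneity of the Cayley graph gives $\beta_{D,u_0}\bigl((K+1)n\bigr)\ge\abs{B_G(v_n,n)}=\beta_\Gamma(n)$ for all $n$, i.e.\ $\beta_\Gamma\preceq\beta_{D,u_0}$, which is $\growth(G)\preceq\growth(D)$.

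So everything reduces to the claim --- equivalently, to the statement that from $u_0$ one can move into $D$ in such a way that the distance to $C$ grows at least linearly with the distance travelled in $\Gamma$. This is the technical heart, and the one point where the splitting itself is used rather than just the coarse separation of Lemma~\ref{lem:Papa}. I would set it up through the Bass--Serre tree $T$ of $G=A\ast_C B$ (the HNN case being analogous) and the $G$-equivariant map $\pi\colon\Gamma\to T$, $g\mapsto g\tilde v_0$, where $\tilde v_0$ is an endpoint of the edge of $T$ stabilised by $C$. The standard facts available are: $\pi$ is $c$-Lipschitz for some $c$; because $C$ fixes $\tilde v_0$ one has $\pi(L)\subseteq B_T(\tilde v_0,R_0)$ for some $R_0$, whence $\dist_\Gamma(v,C)\ge\tfrac1c\bigl(\dist_T(\pi v,\tilde v_0)-R_0\bigr)$ for every vertex $v$; and every vertex of $T$ has valence at least two (as $C$ is proper in $A$ and in $B$), so $T$ has at least two ends. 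Passing to a finite generating set of $\Gamma$ adapted to the splitting (one containing finite generating sets of $A$ and of $B$) --- permissible, since the statement is insensitive to the choice of finite generating set --- the goal is to produce a path $u_0=w_0,w_1,w_2,\dots$ in $\Gamma$ with steps of bounded length such that (i) the path stays inside $D$, and (ii) its $\pi$-image proceeds along a geodesic ray of $T$ based near $\pi(u_0)$ and heading into a branch away from $B_T(\tilde v_0,R_0)$, so that $\dist_T(\pi w_k,\tilde v_0)$, and hence by the displayed inequality $\dist_\Gamma(w_k,C)$, grows linearly in $k$; then $v_n:=w_k$ for the first $k$ with $\dist_\Gamma(w_k,C)\ge n+\con+1$ satisfies the claim with $\dist_\Gamma(u_0,v_n)=O(n)$.

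I expect the main difficulty to be the coexistence of (i) and (ii): keeping the path inside the single component $D$ while forcing its tree-projection outward, i.e.\ preventing it from dipping into $L$ near $\tilde v_0$ and re-emerging in a \emph{different} deep component. This is exactly where the hypothesis that $\Gamma\setminus L$ has at least two deep components, and the branching of $T$, must be used --- effectively one chooses the initial direction so that the lifted path never meets $B_T(\tilde v_0,R_0)$. Finally, it should be stressed that homogeneity of $\Gamma$ does not suffice on its own for the claim: a subset of $\Gamma$ can contain an isometric copy of the ball $B_G(e,n)$ for every $n$ and still have strictly smaller growth rate --- place such balls ever farther apart and join them by a single path --- so the argument must genuinely exploit that $D$ is the complement, inside $\Gamma$, of a metric neighbourhood of a \emph{subgroup}.
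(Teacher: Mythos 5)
Your proposal correctly sets up the reduction: you show $\growth(D)\preceq\growth(U)\preceq\growth(G)$ for free, correctly observe that a deep component is ``fat'' (i.e.\ $B_\Gamma\bigl(v,\dist_\Gamma(v,C)-\con-1\bigr)\subseteq D$ for $v\in D$), and correctly isolate the claim that carries all the content: there must exist $K$ so that for every $n$ one can find $v_n\in D$ with $\dist_\Gamma(u_0,v_n)\le Kn$ \emph{and} $\dist_\Gamma(v_n,C)\ge n+\con+1$. Your remark that homogeneity of $\Gamma$ alone cannot yield this --- that the linear control on $\dist_\Gamma(u_0,v_n)$ is the genuine issue --- is exactly right. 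But you do not prove the claim; you only describe a programme (Bass--Serre tree $T$, the $c$-Lipschitz projection $\pi\colon\Gamma\to T$, a path whose $\pi$-image heads monotonically down a branch) and you yourself flag the crux as an open difficulty (``I expect the main difficulty to be the coexistence of (i) and (ii) \dots''). As written this is a sketch with a hole precisely at the technical heart, so it does not constitute a proof.

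The paper closes this gap by a much lighter mechanism that bypasses the Bass--Serre tree entirely: it exploits that $C$ acts cocompactly on $L=B_G(C,\con)$. Fix $v_0\in U$ with $\dist(v_0,L)=1$. Because $D$ is a deep connected component adjacent to $L$, the distance function $v\mapsto\dist(v,L)$ restricted to $D$ takes every value $\geq1$, so for each $n$ there is $v_n\in D$ with $\dist(L,v_n)=n+1$; let $u_n\in L$ realise this distance. Cocompactness of the $C$-action on $L$ gives a single constant $K$ such that some $c\in C$ carries $u_n$ into $B_\Gamma(v_0,K)$, and (since $C$ preserves $L$ and hence permutes the components of $\Gamma\setminus L$ in a way compatible with $U$) replacing $v_n$ by $cv_n$ keeps it in $U$ while now $\dist(v_0,v_n)\le K+n+1$. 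This is exactly your claim, obtained by ``recentering'' via the group action rather than by building a path whose tree-projection is controlled. The two approaches are genuinely different; yours would require substantially more machinery and the step you did not supply (keeping the path inside $D$ while forcing the tree-projection outward along a chosen branch) is where the actual work would have to happen. The paper's translation trick avoids that problem altogether and is the decisive simplification your proposal is missing.
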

	\begin{proof}
		Let $L$ denote $B_\Gamma(C, \con)$. By hypothesis, $L$ separates $\Gamma$ into at least two deep connected components. Let $v_0$ be a vertex in $U$ such that $\dist (v_0, L)=1$; observe that such vertex always exists.  Since the action of $C$ on $L$ has finitely many orbits of vertices, there exists a constant $K>0$ such that any vertex of $L$ can be moved by an element of $C$ into $B_{\Gamma}(v_0,K) \cap L$. To prove the lemma, we will show that 
		\begin{equation}\label{eq:lemma2}
			\abs{B_{\Gamma}\left(e, n\right)} \leq   \abs{B_{\Gamma}\left(v_0, 2n+K+1\right) \cap U}
		\end{equation}
		for every $n\geq 0$. Therefore $\growth(G) = \growth(\Gamma) \preceq \growth(U)$. 

		Since $U$ contains the vertices of a deep component $D$ of $\Gamma\setminus L$, for every $n \geq 1$ there is a vertex $v_n \in D$ such that $\dist(L,v_n)=n+1$. Let $u_n \in L$ be the vertex realizing this distance, i.e.,\ $\dist(u_n, v_n)=n+1$. Since $\bigcup_{g\in C} gD \subset U$, by multiplying $v_n$ by an element of $C$ if necessary, we can assume that $u_n \in B_{\Gamma}(v_0,K)\cap L$ and $v_n\in U$.  Hence, \[\dist (v_0, u_n) \leq K, \quad  v_n\in U, \quad  \dist(u_n, v_n) =n+1.\]
		Notice that  
		\begin{equation*}
			v_nB_{\Gamma}(e, n) = B_{\Gamma}(v_n, n) \subseteq B_{\Gamma}(v_0, 2n+ K+1)
		\end{equation*}
		and \[v_nB_{\Gamma}(e, n) = B_{\Gamma}(v_n, n) \subseteq U,\] where the last statement follows from the assumptions that $v_n\in U$ and that $\dist(v_n, L)=n+1$. Putting these two statements together yields \[v_nB_{\Gamma}(e, n)  \subseteq  B_{\Gamma}(v_0, 2n+ K+1)\cap U \] which verifies inequality~\eqref{eq:lemma2}.
	\end{proof}

\begin{proposition}\label{prop:911}
    Let $G$ be a finitely generated group that splits over a finitely generated subgroup $C$. Then there is a finite generating set $S$ of $G$ with the following property. Let  
 $\Gamma=\Gamma(G,S)$     be the Cayley graph. If
    $\Gamma \setminus B_\Gamma(C, l)$ has at least two deep components and $X$ is a connected subgraph of $\Gamma\setminus B_\Gamma(C,l)$, then there is a deep   component $D$ of $\Gamma\setminus B_\Gamma(C,l)$ such that the   $\bigcup_{g\in C} gD$ has no vertex in $X$.
\end{proposition}
\begin{proof} 
If $G=A\ast_C B$ where $C$ is a proper subgroup of both factors, let $S$ be the union of finite   generating sets of $A$ and  $B$. If $G=A\ast_C = (A\ast \langle t\rangle)/\nclose{tat^{-1}\varphi(a)\colon a\in A}$, let $S$ be a generating set for $A$ together with the stable letter $t$.  Denote by $\dist$ the word-metric on $G$ induced by $S$.

To prove the statement,  we construct a coloring  of  the vertices of $\Gamma\setminus B_\Gamma(C,l)$ with two colors such that 
\begin{enumerate}
\item the coloring is $C$-equivariant, 
\item any 
connected subgraph of 
$\Gamma\setminus B_\Gamma(C,l)$ is monochromatic, and 
\item there is at least one deep component of each color.
\end{enumerate}
Assuming that we have such coloring, if $D$ is a deep component of color different that the color of $X$, then by $C$-equivariance, all vertices in  $\bigcup_{g\in C}gD$ have the same color, and the statement of the proposition follows. 

To define the coloring, we use the  barycentric subdivision of the (geometric realization of the) Bass-Serre tree $T$ of the splitting,  and endow it with the edge-path metric $\dist_T$. Let us recall a description of $T$, for details see~\cite{Trees}. If  $G=A\ast_C B$, let $G/A$ denote the $G$-set of left cosets of $A$, and let $G/B$ and $B/C$ denote the analogous $G$-sets. Then  $T$ is the graph with vertex set  $G/A \sqcup G/B \sqcup G/C$  and edge set   the disjoint union of $\{ \{gC, gA\}\colon g\in G \}$ and $\{ \{gC, gB\}\colon g\in G \}$. In the case that $G=A\ast_C = (A\ast \langle t\rangle)/\nclose{tat^{-1}\varphi(a)\colon a\in A}$, then $T$
has vertex set  $G/A\sqcup G/C$, and edge set the disjoint union of  $\{ \{gA, gt^{-1}Ct\} \colon g\in G \}$  and $\{ \{gA, gC\}   \colon g\in G  \}$.

\emph{Definition of the $G$-equivariant coloring.} Removing the degree two vertex $C$ of $T$, splits $T$ into two connected components, say the red one and the blue one. Let $\rho\colon G \to T$ be the $G$-equivariant map given by $g\mapsto gC$. Then each vertex of $\Gamma\setminus B_\Gamma(C,l)$ is  assigned a color, either red or blue according to its $\rho$-image. 

\emph{Connected subgraphs of $\Gamma\setminus B_\Gamma(C,l)$ are monochromatic.} The choice of $S$ implies that for any $s\in S$, $\rho(1)=\rho(s)$ if $s\in C$ and $\dist_T(\rho(1),\rho(s))=2$ if $s\not\in C$. Hence an edge of $\Gamma$ between $g$ and $gs$ with $s\not\in C$ induces a path of length two in $T$ from $gC$ to $gsC$ with middle vertex distinct than $C$. Therefore, any path in $\Gamma$ from $x$ to $y$ that does not pass through a vertex in $C$ induces a path in $T$ from $\rho(x)$ to $\rho(y)$ that does not pass through the vertex $C$ and of at most twice the length. In particular any path (and hence any connected subgraph) in $\Gamma\setminus B_\Gamma(C,l)$ is monochromatic, and
\begin{equation}\label{eq:final911}
  2\dist_G(x,y)\geq \dist_T(\rho(x),\rho(y)) \quad\text{ and }\quad \dist_T(\rho(x),\rho(xs))=2    
\end{equation}
for any $x,y\in G$ and $s\in S\setminus C$.

\emph{The subgraph $\Gamma\setminus B_\Gamma(C,l)$ has blue and red deep components.} 
Since $G$ splits over $C$,  both components of $T\setminus\{C\}$ are infinite trees. Note that the pre-image by $\rho\colon G\setminus B_\Gamma(C,l) \to T$ of an infinite ray in one of the components of $T\setminus\{C\}$ spans a  connected subgraph of $G\setminus B_\Gamma(C,l)$, hence it is monochromatic, and by~\eqref{eq:final911} determines a deep component. Taking an infinite ray in each component of $T\setminus\{C\}$ shows that there are deep components of each color.
\end{proof}

\subsection{Group splittings imply retaining}

	\begin{proposition}\label{cor:split-finite}
		Let $G$ be a finitely generated group that splits over a finite subgroup. 
		If $\Gamma$ is the Cayley graph of $G$ with respect to a finite generating set, then $\Gamma$ has the finite-step retaining property. 
	\end{proposition}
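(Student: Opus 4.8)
The plan is to produce, for every finite initial fire $X_0$, a finite-step retaining $f_0$-strategy that uses only its first turn, with $f_0$ a constant depending on $G$ alone. Write $\Gamma$ for the given Cayley graph; it is locally finite and vertex-transitive, and left translation by any $w\in G$ is an automorphism of $\Gamma$. Since $G$ splits over the finite group $C$, Lemma~\ref{lem:Papa} supplies an integer $\con_0>0$ such that $\Gamma\setminus B_G(C,\con_0)$ has at least two deep components $D_1,D_2$. Because $C$ is finite and $\Gamma$ is locally finite, the set $B_G(C,\con_0)$ is finite; put $f_0:=\abs{B_G(C,\con_0)}$.

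The crux is the following claim: for every finite $X_0\subseteq V(\Gamma)$ there is $w\in G$ such that, with $\Lambda:=wB_G(C,\con_0)$, the set $\Lambda$ is disjoint from $X_0$ and $\Gamma\setminus\Lambda$ has a deep component $D$ disjoint from $X_0$. To see this, fix $R_0$ with $X_0\subseteq B_\Gamma(e,R_0)$. Since $D_1$ is deep it contains vertices arbitrarily far from $B_G(C,\con_0)$, and $\Lambda$ is contained in a ball of radius $\con_0+\diam C$ about $w$, so by choosing $w\in D_1$ far enough we arrange $\Lambda\cap B_\Gamma(e,R_0)=\emptyset$. Left translation by $w$ is a graph automorphism, hence $\Gamma\setminus\Lambda=w\bigl(\Gamma\setminus B_G(C,\con_0)\bigr)$ has at least two deep components, namely $wD_1$ and $wD_2$. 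The ball $B_\Gamma(e,R_0)$ is connected and avoids $\Lambda$, so it lies in a single component of $\Gamma\setminus\Lambda$; since $wD_1\ne wD_2$, at least one of them is not that component, and any such $D$ is disjoint from $B_\Gamma(e,R_0)\supseteq X_0$. This proves the claim. As $D$ is a connected component of $\Gamma\setminus\Lambda$, every vertex outside $D$ but adjacent to $D$ lies in $\Lambda$; writing $\partial D$ for this finite set, we get $\partial D\subseteq\Lambda$, $\abs{\partial D}\le\abs{\Lambda}=f_0$, and $\partial D\cap X_0=\emptyset$.

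Now fix $X_0$ and choose $w,\Lambda,D,\partial D$ as in the claim, and play the strategy $W_1=\partial D$ and $W_n=\emptyset$ for $n\ge2$. This is an $f_0$-strategy, and it is finite-step. Since $\partial D\cap X_0=\emptyset$, every vertex of $\partial D$ is genuinely protected at time $1$ and hence never burns. Consequently the fire never reaches $D$: otherwise let $d\in D$ be the first vertex of $D$ to catch fire, at some time $n\ge1$. By the definition of fire spread (reach one), $d$ is joined by a path of length at most $1$ to a vertex $x\in X_{n-1}$, and this path avoids every vertex protected up to time $n$; in particular $x\notin\partial D$, and $x\notin D$ since no vertex of $D$ is on fire before time $n$. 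But then $x$ is a vertex outside $D$ adjacent to $D$, so $x\in\partial D$ --- a contradiction. Therefore $D\subseteq U$, where $U:=V(\Gamma)\setminus\bigcup_{n\ge0}X_n$.

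Finally, translating by the isometry $w^{-1}$, the set $w^{-1}U$ contains $w^{-1}D=D_1$, which is a deep component of $\Gamma\setminus B_G(C,\con_0)$; hence Lemma~\ref{lem:Mama} gives $\growth(w^{-1}U)=\growth(G)$, and therefore $\growth(U)=\growth(w^{-1}U)=\growth(G)$. Thus $\{W_n\}_{n\ge1}$ is a finite-step retaining $f_0$-strategy for $X_0$, and since $f_0$ does not depend on $X_0$, the graph $\Gamma$ has the finite-step retaining property. The one point I expect to require care is the claim of the second paragraph --- that a single translate of $B_G(C,\con_0)$ pushed deep into $D_1$ separates $\Gamma$ in such a way that some deep component avoids $X_0$, while keeping the size of the separating set bounded by the fixed constant $f_0$; once that is in place, the remainder is a routine unwinding of the definitions of fire spread and of a deep component, together with a single application of Lemma~\ref{lem:Mama}.
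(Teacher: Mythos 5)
Your proof is correct and follows essentially the same route as the paper: translate the finite coarse separator $B_G(C,\con_0)$ far from $X_0$, protect it (or its boundary) at time one, and invoke Lemma~\ref{lem:Mama} to conclude that the surviving deep component has full growth. The only notable differences are that you make explicit the translation by $w^{-1}$ needed to apply Lemma~\ref{lem:Mama} (which is stated for $B_G(C,\con_0)$ itself, not its translates) where the paper leaves this implicit, and that you protect only the frontier $\partial D$ rather than the whole translate, which is a harmless refinement.
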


	\begin{proof}
	Suppose $G$ splits over a finite subgroup $C$. In view of Corollary~\ref{cor:qi-fsrp} is enough to consider $\Gamma$ to be the Cayley graph  of $G$ with respect to a finite generating set $S$ provided by Proposition~\ref{prop:911}. Denote by $\dist$ its edge-path metric. 
		Since $G$ splits over $C$, by Lemma~\ref{lem:Papa}, there is a constant $\con>0$ such that the $\con$-neighborhood $L$ of $C$ in $\Gamma$, \[ L=\{g\in G \colon \dist (g, C) \leq \con \}, \] separates $\Gamma$ into at least two deep components.
		Let $f$ be the cardinality of $L$. Let $X_0$ be a finite subset of $G$. Since $C$ has infinite index in $G$, there is $g\in G$ such that $\dist(gL, X_0) \geq \diam X_0$. The inequality $\dist(gL, X_0) \geq \diam X_0 $ implies that there is a deep component of $\Gamma \setminus gL$ that does not intersect $X_0$. Consider the strategy $\{W_n\}_{n\geq 1}$ where $W_1=gL$ and $W_n=\emptyset$ for $n>1$.  
		Let $X_n$ denote the set of vertices on fire at time $n$ with respect to this strategy and the initial fire $X_0$ of reach one.
		Observe that $X_n\cap W_{1}=\emptyset$ for all $n\geq 0$. Hence $X=\bigcup_{n\geq 0} X_n$ spans a connected subgraph of $G\setminus L$. By Proposition~\ref{prop:911}, there is a deep  component $D$ of $\Gamma\setminus L$ such that $\bigcup_{g\in G}gD$ has no vertex in $X$.
		Let $U=G\setminus X$. By Lemma~\ref{lem:Mama}, we have that $\growth(G) = \growth(U)$ and hence $\{W_n\}_{n\geq 1}$ is a finite-step retaining $f$-strategy for $X_0$.
	\end{proof}

	\begin{theorem}\label{thm:splitpoly}
		Let $G$ be a finitely generated group that splits over a finitely generated subgroup $C$. Suppose that $C$ has polynomial growth of degree $d>0$, and is quasi-isometrically embedded into $G$. If $\Gamma$ is the Cayley graph of $G$ with respect to a finite generating, then $\Gamma$ has polynomial retaining property of degree $d-1$.
	\end{theorem}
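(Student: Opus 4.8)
The plan is to build a ``wall'' far from the initial fire, consisting of a bounded neighbourhood $gL$ of a coset of $C$ in the Cayley graph, and to protect the vertices of $gL$ in increasing order of their distance from the fire, quickly enough that the fire never reaches the wall; the growth hypothesis on $C$ is precisely what makes a strategy bound of degree $d-1$ sufficient. To set up, I would invoke Lemma~\ref{lem:Papa} to fix $\con>0$ so that $L:=B_G(C,\con)$ separates $\Gamma$ into at least two deep components; in particular $C$ is not coarsely dense in $\Gamma$. Because $L$ lies within Hausdorff distance $\con$ of $C$, and $C$ is quasi-isometrically embedded in $G$ and has polynomial growth of degree $d$, there is a constant $K_0$, depending only on $G,S,C,\con$, with $\abs{B_\Gamma(w,m)\cap gL}\le K_0(m+1)^d$ for all $g\in G$, $w\in V(\Gamma)$ and $m\ge0$; uniformity in $g$ holds because left multiplication by $g$ is an isometry of $\Gamma$ taking $L$ onto $gL$. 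Put $K:=d\,4^dK_0$; I claim $\Gamma$ has the $\{Kn^{d-1}\}$-retaining property.

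Given a finite nonempty vertex set $X_0$, since $C$ has infinite index in $G$ I can choose $g\in G$ with $D:=\dist_\Gamma(X_0,gL)\ge\diam X_0+1$. Exactly as in the proof of Proposition~\ref{cor:split-finite}, this forces all of $X_0$ into one component of $\Gamma\setminus gL$, so among the at least two deep components of $\Gamma\setminus gL$ there is one, say $\Omega$, with $\Omega\cap X_0=\emptyset$. Enumerate the vertices of $gL$ as $w_1,w_2,\dots$ with $\dist_\Gamma(X_0,w_1)\le\dist_\Gamma(X_0,w_2)\le\cdots$ (possible by local finiteness), and define the strategy $\{W_n\}_{n\ge1}$ by letting $W_n$ be the next block of at most $f_n:=Kn^{d-1}$ vertices from this list; then $\{w_1,\dots,w_M\}\subseteq W_1\cup\cdots\cup W_T$ whenever $M\le\sum_{n=1}^Tf_n$.

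The crux is a counting estimate. For $k\ge0$ set $L_k=\{w\in gL:\dist_\Gamma(X_0,w)=D+k\}$ and $N_k=\abs{L_0\cup\cdots\cup L_k}=\abs{\{w\in gL:\dist_\Gamma(X_0,w)\le D+k\}}$, so that $L_0\cup\cdots\cup L_k=\{w_1,\dots,w_{N_k}\}$. Choosing $x_\ast\in X_0$ and $w_\ast\in gL$ realizing $D=\dist_\Gamma(X_0,gL)$, the triangle inequality together with $\diam X_0\le D$ gives $\dist_\Gamma(w_\ast,w)\le 3(D+k)$ for every $w$ counted by $N_k$, whence $N_k\le\abs{B_\Gamma(w_\ast,3(D+k))\cap gL}\le K_0(3(D+k)+1)^d\le 4^dK_0(D+k)^d$. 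Since $\sum_{n=1}^{D+k}f_n\ge K(D+k)^d/d=4^dK_0(D+k)^d\ge N_k$ (using $\sum_{n=1}^Tn^{d-1}\ge T^d/d$ for $d\ge1$), we conclude $L_0\cup\cdots\cup L_k\subseteq W_1\cup\cdots\cup W_{D+k}$ for all $k$. Hence every $w\in gL$ is scheduled into some $W_n$ with $n\le\dist_\Gamma(X_0,w)$; as the fire has reach one, $X_{n-1}\subseteq B_\Gamma(X_0,n-1)$ misses $w$, so $w$ is genuinely protected at time $n$ and never catches fire. Thus $gL\cap\bigcup_{n\ge0}X_n=\emptyset$. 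A straightforward induction on $n$ now yields $\Omega\cap X_n=\emptyset$: a vertex of $\Omega$ burning at time $n$ would be adjacent to a vertex burning at time $n-1$, which by the inductive hypothesis lies outside $\Omega$; being adjacent to $\Omega$, that vertex must lie in $gL$, contradicting the previous sentence. Therefore $\Omega\subseteq U:=\Gamma\setminus\bigcup_{n\ge0}X_n$.

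Finally, $g^{-1}\Omega$ is a deep component of $\Gamma\setminus L=\Gamma\setminus B_G(C,\con)$ contained in $g^{-1}U$, so Lemma~\ref{lem:Mama} gives $\growth(g^{-1}U)=\growth(G)$, and hence $\growth(U)=\growth(G)$ because $U$ and $g^{-1}U$ are isometric. Thus $\{W_n\}_{n\ge1}$ is a retaining $\{Kn^{d-1}\}$-strategy for $X_0$, and since $X_0$ was arbitrary, $\Gamma$ has the polynomial retaining property of degree $d-1$. The step I expect to be the main obstacle is the uniform growth estimate $\abs{B_\Gamma(w,m)\cap gL}\le K_0(m+1)^d$ with $K_0$ independent of $g$ and $w$ — extracted from quasi-isometric embeddedness of $C$, the polynomial growth of $C$, and homogeneity of $\Gamma$ — together with ensuring the appearance of $\diam X_0$ in the counting estimate is harmless; the latter is arranged precisely by placing the wall $gL$ at distance at least $\diam X_0+1$ from $X_0$, so that $\diam X_0\le D$ is absorbed into $D$ at the cost of a constant $K$ independent of $X_0$.
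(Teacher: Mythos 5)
Your argument is correct and mirrors the paper's proof: coarse separation via Lemma~\ref{lem:Papa}, polynomial growth of the wall $gL$ coming from quasi-isometric embeddedness of $C$, protection scheduled in order of distance from $X_0$ so the fire never reaches $gL$, and Lemma~\ref{lem:Mama} to conclude that the saved deep component has the full growth of $G$. The only deviation is cosmetic --- you absorb $\diam X_0$ by requiring $D=\dist_\Gamma(X_0,gL)\ge\diam X_0+1$ and accepting a factor $4^d$ in the counting estimate $N_k\le 4^dK_0(D+k)^d$, whereas the paper pushes $gL$ far enough that $\abs{M_{n,g,X_0}}<Fn^d$ holds outright --- and both routes yield the same degree $d-1$.
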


	\begin{proof}
	By Corollary~\ref{cor:qi-fsrp}, it is enough to prove the statement for $\Gamma$ the Cayley graph with respect a finite generating set $S$ provided by Proposition~\ref{prop:911}. 	Let $\dist$ denote the word-metric on $G$ with respect to $S$. Since $G$ splits over $C$, there is a constant $\con>0$ such that the $\con$-neighborhood $L$ of $C$ in $\Gamma$, 
		\[ L=\{g\in G \colon \dist  (g, C) \leq \con \}, \]
		separates $\Gamma$ into at least two deep components, see Lemma~\ref{lem:Papa}.

		\begin{step}\label{step:growthL}
			There is a constant $K_1>0$ such that for any $g\in G$, for any $y_0 \in gL$, and for any $n>0$ we have \[\beta_{gL, y_0} (n) \leq K_1 n^d\] where $\beta_{gL, y_0}$ is the growth function of the metric space $(gL, \dist)$. 
		\end{step}
		\begin{proof}[Proof of Step~\ref{step:growthL}]
			Let $\dist_C$ denote a word-metric on $C$ with respect to a finite generating set of $C$. The assumption that $C$ is quasi-isometrically embedded in $G$ means that  the  spaces $(C, \dist )$ and $(C, \dist_C)$ are quasi-isometric. It follows that $(C, \dist_C)$,  $(C, \dist )$, $(L, \dist )$ are all quasi-isometric. Since they all are discrete uniformly proper metric spaces, by Remark~\ref{rem:growth}\eqref{it:growth3} they all have polynomial growth of degree $d$.  Since $C$ acts by isometries and cocompactly on $(L, \dist)$,  there exists a constant $K_1>0$ such that for any choice of basepoint on $L$, the corresponding growth function of $(L, \dist)$ is bounded from above by $K_1n^d$. Since the spaces $(L, \dist )$ and $(gL, \dist )$ are isometric, the statement follows.
		\end{proof}

		\begin{step}\label{lem:diam-est}
			Let $K=2^dK_1$.  Let $X_0$ be a finite subset of $G$, $g$ an element of $G$, and $n$ a positive integer. Define 
			\[M_{n, g, X_0} = \{x\in gL \colon \dist (x, X_0) \leq n\}.\]
			Then 
			\[ \abs{M_{n, g, X_0}} \leq  K(n  +\diam X_0)^d. \]
		\end{step}
		Note that $M_{n, g, X_0}$ is the set of vertices of $gL$ that would be on fire by the time $n$ if the initial fire was $X_0$ and no vertices were protected.
		\begin{proof}[Proof of Step~\ref{lem:diam-est}]
			By Step~\eqref{step:growthL},  the growth function of $(gL, \dist)$ with respect to any basepoint is bounded by $K_1n^d$. Let $y_0$ be an element of $gL$ such that $\dist (y_0, X_0) = \dist  (gL, X_0)$. 
			The triangle inequality implies that 
			\[ \diam M_{n, g, X_0} \leq  2n +\diam X_0,\]
			and hence  $M_{n, g, X_0}$ is contained in the ball $B_{gL}(y_0, 2n+\diam X_0)$. To conclude, observe that
			\[ \abs{M_{n, g, X_0}} \leq  \abs{B_{gL}(y_0, 2n+\diam X_0 )} \leq K_1(2n+\diam X_0 )^d. \qedhere\]
		\end{proof}

		\begin{step}\label{step:3}
			Let $F=K+1$. Let $X_0$ be a finite subset of $G$. Then there is $g\in G$ such that 
			\begin{equation}\label{eq:choiceg3}
				\diam X_0 <\dist(gL, X_0),
			\end{equation}
			and for every $n>0$
			\begin{equation}\label{eq:GeneralM2}
				\abs{M_{n, g, X_0}}  <  \sum_{k=1}^n dF k^{d-1}.
			\end{equation}
		\end{step}
		\begin{proof}[Proof of Step~\ref{step:3}]
			By enlarging $X_0$ if necessary, we can assume that it contains the identity element of $G$. 
			Since $X_0$ is finite and the index of $C$ in $G$ is infinite, we can choose $g\in G$ such that $\dist(gL, X_0)$ is large enough to guarantee that both inequality~\eqref{eq:choiceg3} and the following inequality are satisfied.
			\begin{equation*}
				K(\dist(gL, X_0)  +\diam X_0)^d <  F (\dist(gL, X_0) )^d.
			\end{equation*}
			This inequality together with the statement of Step~\ref{lem:diam-est} implies that
			\begin{equation*}
				\abs{M_{\dist(gL, X_0), g, X_0}} <  F (\dist(gL, X_0))^d.
			\end{equation*}
			Since $M_{n, g, X_0}$ is empty for $n<\dist (gL, X_0)$ and $F>K$, it follows that $ \abs{M_{n, g, X_0}}  <  F n^d $ for every $n\in \N$. A calculus exercise shows that $n^d \leq d \sum_{k=1}^n k^{d-1}$, and thus inequality~\eqref{eq:GeneralM2} is satisfied. 
		\end{proof}

		Inequality~\eqref{eq:GeneralM2} allows us to define a retaining $\{dFn^{d-1}\}$-strategy for any finite subset $X_0$ of $G$; this is proved in the next step concluding the proof of the theorem.  To simplify the notation define \[p_n= \sum_{k=1}^n dF k^{d-1},\] and observe that $p_n$ is the maximal number of vertices that can be protected by the time $n$ using a $\{dFn^{d-1}\}$-strategy. 

		\begin{step}\label{step:4}
			Let $X_0$ be a finite subset of $G$. Let $g\in G$ be an element satisfying inequalities~\eqref{eq:choiceg3} and~\eqref{eq:GeneralM2}.  Let $w_1,w_2, w_3, \ldots$ be an enumeration of the countable set $gL$ such that the sequence $\{\dist(w_i, X_0) \}_{i\geq 1}$ is non-decreasing, and for each integer $n\geq 1$ let
			\[ W_n = \Bigg \{w_i \colon p_{n-1}<i\leq p_n \text{ and } w_i \not \in \bigcup_{1\leq i<n} W_i \Bigg \}. \]
			Then $\{W_n\}_{n\geq 1}$ is a retaining $\{dFn^{d-1}\}$-strategy for $X_0$.
		\end{step}
		\begin{proof}[Proof of Step~\ref{step:4}]
			Observe that
			\[ \abs{W_n} \leq p_n-p_{n-1} = dFn^{d-1} \quad \text{ for every  $n \geq 0$}, \]
			and hence $\{W_n\}_{n\geq 1}$ is a  $\{dFn^{d-1}\}_{n\geq1}$-strategy. Let $X_n$ denote the set of vertices on fire at time $n$ with respect to this strategy and the initial fire $X_0$ of reach one. We claim that 
			\begin{equation} \label{eq:disjointWX} 
				X_n \cap W_{n+1} = \emptyset, \quad \text{ for all $n\geq 0$}.
			\end{equation}
		 	Indeed, observe that $X_0$ and $W_1$ are disjoint as a consequence of inequality~\eqref{eq:choiceg3}. Suppose, by induction, that $X_{n-1}$ has been defined, and $X_{n-1} \subseteq  B_\Gamma(X_0, n-1)$, and $X_{n-1}$ and $W_{n}$ are disjoint. Recall that $X_n$ consists of vertices $v$ such that $\dist (v, X_{n-1}) \leq 1$ and $v\not \in W_1\cup \cdots \cup W_n$. Thus $X_n \subseteq  B_\Gamma(X_0, n) $.  Since  $W_{n+1} \subseteq gL$, we have	\[X_n\cap W_{n+1} \subseteq X_n \cap gL   \subseteq  B_\Gamma(X_0, n) \cap gL =  M_{n, g, X_0} \subseteq \bigcup_{i=1}^n W_i,\]
		 	where the last inclusion is a consequence of~\eqref{eq:GeneralM2} and the definition of the $W_i$'s.
			By definition, $W_{n+1} \cap \bigcup_{i=1}^n W_i = \emptyset$, and therefore  $X_n \cap W_{n+1} = \emptyset.$ This concludes the verification of equation~\eqref{eq:disjointWX}.

            By definition $X_n \subset X_{n+1}$ for all $n\geq 0$ and $X_n\cap W_m=\emptyset$ if $n\geq m$. Let $X=\bigcup_{n\geq 1}X_n$ and observe that~\eqref{eq:disjointWX} implies that
            \[ X\cap L = \bigcup_{n\geq 0} X_n \cap \bigcup_{n\geq 1} W_n = \emptyset.\]
            Since $X$ spans  a connected subgraph of $\Gamma\setminus L$,  the choice of the finite generating set $S$ given by Proposition~\ref{prop:911} implies that there is a deep component $D$ of $\Gamma\setminus L$ such that $\bigcup_{g\in G} gD$ has no vertex in $X$. 	Let $U= G \setminus X$ and note that Lemma~\ref{lem:Mama} implies that  $\growth(G) = \growth(U)$, and hence $\{W_n\}_{n\geq 1}$ is a retaining $\{dFn^{d-1}\}$-strategy for $X_0$.
		\end{proof} 

		Step~\ref{step:4} concludes the proof of the theorem.
	\end{proof}

	\subsection{Ends of groups and the finite-step retaining property}

	The following proposition uses the notion of an \emph{end} of a topological space. For a definition of an end we refer the reader to~\cite{BH99}, and we follow the convention that a graph carries the weak topology.

	\begin{proposition}\label{prop:FiniteStepEnds}
		Let $G$ be a locally finite connected graph. If $G$ has the finite-step retaining property of degree $d$, then either $G$ has the containment property of degree $d$, or $G$ has at least two ends. 
	\end{proposition}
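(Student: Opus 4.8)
The plan is to establish the contrapositive form of the dichotomy: \emph{assuming that $G$ has at most one end, we prove that $G$ has the containment property of degree $d$}. If $G$ is finite this is immediate, because for a finite graph every strategy is a containment strategy (the set $\bigcup_{n\geq 0}X_n$ is automatically finite). So assume $G$ is infinite, hence one-ended. Fix a finite initial fire $X_0$ and, using the hypothesis, choose a finite-step retaining $\{f_n\}$-strategy $\{W_n\}_{n\geq 1}$ in which $f$ is a polynomial of degree $d$ and $W_n=\emptyset$ for all $n>N$. By Lemma~\ref{lem:efficient} we may replace it by an equivalent strategy with $X_n\cap W_{n+1}=\emptyset$ for all $n$; then every vertex that is ever put in some $W_n$ is genuinely protected and so never burns. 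Consequently $P:=\bigcup_{n}W_n$ is a finite set disjoint from $F:=\bigcup_{n\geq 0}X_n$, and the set $U=V(G)\setminus F$ of vertices that never catch fire contains $P$.

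The argument then rests on three structural facts. First, since $G$ is connected and locally finite and $P$ is finite, $G\setminus P$ has only finitely many connected components: this is clear if $P=\emptyset$, and otherwise every such component contains a vertex adjacent to $P$, and there are at most $\sum_{p\in P}\deg(p)<\infty$ vertices adjacent to $P$. Second, since $G$ is one-ended and infinite, exactly one component $D$ of $G\setminus P$ is infinite and all the others are finite. Third, because the fire has reach one and cannot cross a protected vertex, $F$ is exactly the union of those components of $G\setminus P$ that meet $X_0$ (each such component burns completely, and no other component burns at all), and hence $U$ is $P$ together with the union of the remaining components. Now I distinguish two cases. If $D\cap X_0\neq\emptyset$, then $D\subseteq F$, so $U$ is contained in the union of $P$ with finitely many finite components and is therefore finite; but then $\growth(U)$ is bounded while $\growth(G)$ is unbounded (as $G$ is an infinite connected graph), contradicting the assumption that $\{W_n\}$ is a retaining strategy. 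Hence $D\cap X_0=\emptyset$, which forces $F$ to be a finite union of finite components, i.e.\ $F$ is finite. Thus $\{W_n\}$ is in fact a containment strategy for $X_0$.

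It remains to fix the degree. Writing $f(n)=a_dn^d+\cdots+a_0$ with $a_d>0$ and setting $K=\sum_{i}\abs{a_i}$, we have $\abs{W_n}\leq f(n)\leq Kn^d$ for every $n\geq 1$, where $K$ depends only on $f$ and not on $X_0$. Therefore, for every finite initial fire $X_0$ we have exhibited a $\{Kn^d\}$-containment strategy, so $G$ has the containment property of degree $d$; combined with the easy cases this yields the claimed dichotomy. The only point that needs a little care is the third structural fact — that for an efficient strategy of reach one a component of $G\setminus P$ is burnt precisely when it meets $X_0$ — but this follows by an easy induction on the length of a path in $G\setminus P$ joining a vertex to $X_0$, using that at reach one the fire spreads to every unprotected neighbour of a burnt vertex.
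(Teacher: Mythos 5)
Your proof is correct and rests on the same core observation as the paper's: after passing to an efficient strategy (Lemma~\ref{lem:efficient}), the finite set $P=\bigcup_n W_n$ of protected vertices is disjoint from the burnt set $F$ and, because $G$ is locally finite, $G\setminus P$ has only finitely many connected components. The paper runs the argument forward (``not contained $\Rightarrow$ at least two ends'') by extracting an infinite connected piece $A'$ of the burnt set and an infinite connected piece $B'$ of $U\setminus W$ and observing they lie in different components of $G\setminus W$; you instead argue the contrapositive (``one end $\Rightarrow$ containment''), and en route you establish a sharper structural claim the paper does not state: with the efficient strategy, the burnt set $F$ is \emph{exactly} the union of those components of $G\setminus P$ that meet $X_0$. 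That claim, together with one-endedness (unique infinite component $D$) and the retaining hypothesis (which rules out $D\cap X_0\neq\emptyset$, since otherwise $U$ would be finite), immediately forces $F$ to be finite. Your version is slightly cleaner: you analyse components of $G\setminus P$ directly rather than the paper's slightly more delicate component-count for the subgraph spanned by $U\setminus W$, and the explicit description of $F$ makes the case split transparent. The degree bookkeeping at the end ($f(n)\leq Kn^d$ with $K$ depending only on $f$) is also handled correctly.
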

	\begin{proof}
		Let $K>0$ be a constant such for every finite subset of vertices of $G$ there is a finite-step retaining $\{Kn^d\}$-strategy. Suppose that $G$ does not have the containment property of degree $d$. In particular, this implies that $G$ has infinitely many vertices. Then there is an initial fire $X_0$ of reach one for which there is no finite-step retaining $\{Kn^d\}$-strategy that contains it. Let $\{W_n\}_{n\geq 1}$ be a  finite-step retaining $\{Kn^d\}$-strategy for the initial fire $X_0$. Let $X_n$ be the set of vertices on fire at time $n$ with respect to this strategy. Let 
		\begin{equation*}
			X=\bigcup_{n\geq0} X_{n} \quad \text{ and } \quad W=\bigcup_{n\geq1} W_n,
		\end{equation*} and let $U$ be the complement of $X$ in the set of vertices of $G$. 
		Since $\{W_n\}_{n\geq 1}$ is a finite-step retaining strategy for $X_0$, the set $W$ is finite. We claim that $G\setminus W$ contains at least two unbounded connected components. 

		Since $X_0$ is not contained by the strategy $\{W_n\}_{n\geq 1}$, the set $X$ is infinite.   By definition of $X_n$, see Subsection~\ref{def:Xn}, every vertex of $X$ is connected to a vertex of $X_0$ by a path in $G$ that contains only vertices in $X$. Since $X_0$ is finite and $X$ is infinite, the subgraph $A$ of $G$ spanned by $X$ contains an infinite connected subgraph that we denote by $A'$. 

		Since $\{W_n\}_{n\geq 1}$ is a retaining strategy for $X_0$, it follows that $\growth(G)=\growth(U)$. Since $G$ is connected and has infinitely many vertices, $U$ is an infinite subset of vertices.
		Let $U'=U\setminus W$.  
		By definition of $X_n$ every path in $G$ between a vertex in $X$ and a vertex in $U'$ contains a vertex in $W$.
		Let $B$ be the subgraph of $G$ spanned by $U'$.
		Consider the map from the collection of connected components of $B$ to the collection of non-empty subsets of $W$, that assigns to a connected component the subset of elements of $W$ that appear in minimal length paths from a vertex in the component  to a vertex in $X$. Since the graph $G$ is locally finite, this map is finite to one. Therefore the number of connected components of $B$ is finite. Since $U'$ is infinite, $B$ contains an infinite connected subgraph that we denote by $B'$. 

		Because paths between $X$ and $U'$ have to pass through $W$, we have that $A'$ and $B'$ are contained in different connected components of $G\setminus W$. Since $G$ is locally finite, the infinite connected subgraphs $A'$ and $B'$ are unbounded.  Therefore $G\setminus W$ contains at least two unbounded connected components which implies that $G$ has at least two ends. 
	\end{proof}

	\begin{corollary}\label{cor:Tomasz}
		Let $G$ be a finitely generated group. If $G$ has the finite-step retaining property, then either $G$ has the constant containment property or $G$ has infinitely many ends. 
	\end{corollary}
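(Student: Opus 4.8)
The plan is to deduce this from Proposition~\ref{prop:FiniteStepEnds} with $d=0$, with the two-ended case handled separately. First recall that the finite-step retaining property of a finitely generated group $G$ means, by definition, that a Cayley graph $\Gamma$ of $G$ with respect to a finite generating set has the finite-step polynomial retaining property of degree zero. Such a graph $\Gamma$ is locally finite and connected, so Proposition~\ref{prop:FiniteStepEnds} applies with $d=0$ and produces the dichotomy: either $\Gamma$ has the containment property of degree $0$, or $\Gamma$ has at least two ends. In the first case $\Gamma$ has the constant containment property by definition, hence so does $G$, and we are done. (In particular this already covers finite $G$, whose Cayley graph trivially has the constant containment property since it has only finitely many vertices.)

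So assume $\Gamma$, and therefore $G$, has at least two ends. Since $G$ is finitely generated, its number of ends lies in $\{0,1,2,\infty\}$; see~\cite{BH99}. Having at least two ends rules out $0$ and $1$, so $G$ has either exactly two ends or infinitely many ends. If $G$ has infinitely many ends, we are done. It remains to treat a two-ended $G$: such a group is virtually infinite cyclic, hence has linear growth, and in particular growth at most quadratic. By~\cite[Theorem~1]{DMT17}, $G$ then has the constant containment property, which completes the argument.

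I do not expect a genuine obstacle here: the corollary is essentially the specialization of Proposition~\ref{prop:FiniteStepEnds} to the case $d=0$, transported from graphs to groups via the definitions. The only extra input is the two-ended case, which is dispatched by combining the classification of two-ended groups as virtually cyclic with the known fact that groups of growth at most quadratic have the constant containment property; both are standard and require no computation. The mildly delicate point, if any, is simply to make sure the ``constant containment'' alternative is the one that absorbs both the finite and the two-ended groups, so that the stated dichotomy (constant containment versus infinitely many ends) is exhaustive.
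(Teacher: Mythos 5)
Your proof is correct and takes essentially the same route as the paper: apply Proposition~\ref{prop:FiniteStepEnds} with $d=0$ to a Cayley graph, invoke the classification of ends of finitely generated groups, and dispatch the two-ended case via the chain ``two-ended $\Rightarrow$ virtually cyclic $\Rightarrow$ linear growth $\Rightarrow$ constant containment'' using \cite[Theorem~1]{DMT17}. The only difference is organizational (you argue directly by cases, the paper argues by assuming the failure of constant containment), which is purely cosmetic.
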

	\begin{proof}
		Suppose that $G$ does not have the constant containment property.  Finitely generated groups with two ends are virtually cyclic and hence they have linear growth~\cite[Part~I, Theorem~8.32(3) and Example~8.36]{BH99}. Since finitely generated groups with growth at most quadratic have the constant containment property~\cite[Theorem~1]{DMT17}, it follows that $G$ does not have two ends. On the other hand, a finitely generated group has either $0$, $1$, $2$, or infinitely many ends~\cite[Part~I, Theorem~8.32(1)]{BH99}. Therefore Proposition~\ref{prop:FiniteStepEnds} implies that $G$ has infinitely many ends.
	\end{proof}
	  
	\begin{remark}\label{rem:differentd}
		The finite-step retaining property of degree $d+1$ is not equivalent to the finite-step retaining property of degree $d$ for $d \geq 0$. Indeed, consider the group $ G =\mathbb{Z}^{d+3}$. This group has containment property of degree $d+1$, see~\cite[Theorem~3]{DMT17}. In particular $G$ has the finite-step retaining property of degree $d+1$. However, by~\cite[Corollary~6]{DMT17}, the group $G$ does not have the containment property of degree $d$. Since $G$ is one-ended, by Proposition~\ref{prop:FiniteStepEnds}, $G$  does not have the finite-step retaining property of degree $d$.
	\end{remark}

	\bibliographystyle{plain}
	\bibliography{xbib}

\end{document}